\numberwithin{equation}{section}
\numberwithin{figure}{section}
\theoremstyle{theorem}
  \newtheorem*{cor*}{Corollary}
  \newtheorem*{thm*}{Theorem}
  \newtheorem*{lem*}{Lemma}
  \newtheorem*{claim*}{Claim}
  \newtheorem*{mclaim*}{Main Claim}
  \newtheorem{thmx}{Theorem}
  \newaliascnt{corx}{thmx}
  \newtheorem{corx}[corx]{Corollary}
  \newtheorem{thm}{Theorem}[section]
  \newaliascnt{lem}{thm}
  \newtheorem{lem}[lem]{Lemma}
  \newaliascnt{klem}{thm}
  \newaliascnt{cor}{thm}
  \newtheorem{cor}[cor]{Corollary}
  \newaliascnt{prop}{thm}  
  \newtheorem{prop}[prop]{Proposition}
\theoremstyle{definition}
  \newaliascnt{defn}{thm}
  \newtheorem{defn}[defn]{Definition}
  \newaliascnt{exmpl}{thm}
  \newtheorem{exmpl}[exmpl]{Example}
\theoremstyle{remark}
  \newaliascnt{rem}{thm}
  \newtheorem{rem}[rem]{Remark}
  \theoremstyle{remark}
  \newaliascnt{set}{thm}
  \newtheorem{set}[set]{Setting}
  \theoremstyle{remark}
  \newaliascnt{con}{thm}
  \newtheorem{con}[con]{Construction}
\newcommand{\bbC}{\mathbb{C}}
\newcommand{\bbH}{\mathbb{H}}
\newcommand{\bbK}{\mathbb{K}}
\newcommand{\bbN}{\mathbb{N}}
\newcommand{\bbQ}{\mathbb{Q}}
\newcommand{\bbR}{\mathbb{R}}
\newcommand{\clC}{\mathcal{C}}
\DeclareMathOperator{\dd}{d}
\DeclareMathOperator{\EE}{E}
\DeclareMathOperator{\coker}{coker}
\DeclareMathOperator{\HH}{H}
\DeclareMathOperator{\im}{im}
\DeclareMathOperator{\Ind}{Ind}
\DeclareMathOperator{\Int}{Int}
\DeclareMathOperator{\stab}{Stab}
\newcommand{\Hb}{{\rm H}_{\rm b}}
\newcommand{\Linfty}{L^\infty}
\newcommand{\IE}{{}^{\rm I}{\rm E}}
\newcommand{\Id}{{}^{\rm I}{\rm d}}
\newcommand{\IIE}{{}^{\rm II}{\rm E}}
\newcommand{\IId}{{}^{\rm II}{\rm d}}
\DeclareMathOperator{\GL}{GL}         % general linear group
\DeclareMathOperator{\SL}{SL}         % special linear group
\DeclareMathOperator{\OO}{O}               % orthogonal group
\DeclareMathOperator{\UU}{U}               % unitary group
\DeclareMathOperator{\Sp}{Sp}              % symplectic group
\newcommand{\qand}{\quad \mathrm{and} \quad}
\begin{document}

\title[]{A Quillen stability criterion for bounded cohomology}
% Author I information
\author{Carlos De la Cruz Mengual}
\address{Faculty of Electrical and Computer Engineering \\ Technion, Haifa, Israel}
\email{c.delacruz@technion.ac.il}

% Author II information
\author{Tobias Hartnick}
\address{Institute of Algebra and Geometry \\
KIT, Karlsruhe, Germany}
\email{tobias.hartnick@kit.edu}

\begin{abstract}
We provide a version of Quillen's homological stability criterion for continuous bounded cohomology. This criterion is exploited in the companion paper \cite{DM+Hartnick} in order to derive new bounded cohomological stability results for various families of classical groups.
\end{abstract}

\maketitle

\section{Introduction}
An infinite chain
$
	(G_r)_{r\geq 0} = (G_0 < G_1 < G_2 < \cdots)
$ 
of groups is called \emph{homologically stable} if there exists a function $r: \bbN \to \bbN$ such that the respective inclusions induce isomorphisms
\[
\HH_q(G_{r(q)}) \ \cong \ \HH_q(G_{r(q)+1}) \ \cong \ \HH_q(G_{r(q)+2}) \ \cong \ \cdots   
\]
in group homology for all $q \in \bbN$.\footnote{As usual in homological algebra, we enumerate starting from $0$. Accordingly, we
convene that $0 \in \bbN$, and for any $k \in \bbN$, we denote by $[k]$ the $(k+1)$-element set $\{0,\ldots,k\} \subset \bbN$. We also set $[\infty] := \bbN$.} Any such function $r$ is then called a \emph{stability range} for the family. Homological stability (with a linear stability range) has been established for many families of classical linear algebraic groups, but also for various families of non-linear groups of interest such as mapping class groups or automorphism groups of free groups. 

A particularly succesful method to establish homological stability, employed for example by van der Kallen \cite{vdK}, Harer \cite{Harer}, Hatcher and Vogtmann \cite{HatVogt}, Essert \cite{Essert}, and Sprehn and Wahl \cite{Sprehn-Wahl2}, is based on \emph{Quillen's stability criterion}, which can be stated as follows: Suppose that, for every $r \in \bbN$, we are given a $\Delta$-complex $X(r)$ (also known as semi-simplicial set) endowed with a simplicial $G_r$-action,
%\emph{semi-simplicial}\footnote{A set endowed with a structure that enables the definition of its simplicial (co)homology; see \autoref{def_ssobjects}. Semi-simplicial sets are also known as $\Delta$-complexes, though we will refrain from the use of this terminology.} $G_r$-set $X(r)$ 
and two natural numbers $\gamma(r), \tau(r)$ with the following properties:
\begin{enumerate}[label=(Q\arabic*),leftmargin=2.3pc]
	\item $X(r)$ is \emph{$\gamma(r)$-acyclic}, i.e. the reduced homology $\tilde{\HH}_\bullet(X(r))$ vanishes up to degree $\gamma(r)$;
	\item $X(r)$ is \emph{$\tau(r)$-transitive}, i.e. there is only one $G_r$-orbit of $l$-simplices for $l \in \{0,\ldots,\tau(r)\}$; 
	\item the complexes are \emph{$\tau(r)$-compatible}, i.e.\ the stabilizer of an $l$-simplex in $X(r)$ is isomorphic to $G_{r-l-1}$ for every $l < \tau(r)$, and these isomorphisms are compatible with inclusions of stabilizers in the sense of Condition (MQ3a) below.
\end{enumerate} 
Then $(G_r)_{r \geq 0}$ is homologically stable provided $\min\{\gamma(r), \tau(r)\} \to \infty$ as $r \to \infty$. In addition, a stability range can be computed explicitly from the functions $\gamma$ and $\tau$. See Quillen's unpublished notes \cite{Quillen}, or the more recent treatments by Bestvina \cite{Bestvina} and Sprehn and Wahl \cite{Sprehn-Wahl2} for details and further references. 
%\subsection{Bc-stability}

The purpose of the present article is to establish a similar stability criterion in \emph{continuous bounded cohomology}. The theory of bounded cohomology for discrete groups goes back to Johnson, Trauber, and Gromov \cite{Gromov}, and was later extended to the realm of topological groups by Burger and Monod \cite{Burger-Monod1}; we refer to the monographs by Frigerio \cite{Frigerio} and Monod \cite{Monod-Book} for background and numerous applications. 

By analogy with the classical situation, we say that an infinite family $G_0<G_1< G_2 < \cdots$ of locally compact groups is \emph{bc-stable} (short for \emph{bounded-cohomologically stable}) with \emph{stability range} $r: \bbN \to \bbN$ if, for every $q \geq 0$, the respective inclusions induce isomorphisms
\[
\Hb^q(G_{r(q)}) \ \cong \ \Hb^q(G_{r(q)+1}) \ \cong \ \Hb^q(G_{r(q)+2}) \ \cong \ \cdots   
\]
in continuous bounded cohomology. We will give conditions analogous to (Q1) -- (Q3) and show that these imply bc-stability. For technical reasons we will assume that $G$ is either second-countable or discrete. This setting is broad enough to deal with both almost connected Lie groups (as considered in our article \cite{DM+Hartnick}) and with uncountable discrete groups (as considered by Monod and Nariman \cite{Monod-Nariman}).

Compared to classical (co-)homology, (continuous) bounded cohomology has a more functional analytic flavour---in fact, it can be defined as a derived functor in some suitable exact category of Banach modules; see work of B\"uhler \cite{Buehler}. While it is sometimes possible to extend classical cohomological results to this setting, this usually requires additional efforts in order to take certain functional analytic peculiarities into account. These peculiarities are the reason why we need to assume second countability in the non-discrete case.

\subsection{General setting}
Throughout this article let $G$ denote a locally compact group. We will assume throughout that $G$ is either second-countable or discrete. To formulate analogues of Quillen's conditions in the setting of bounded cohomology, we need to consider $G$-actions on complexes of a suitable type. 
\begin{itemize}
\item In case $G$ is discrete, the associated complexes will be, as in standard (co)homology, $\Delta$-complexes with a simplicial $G$-action (i.e.\ semi-simplicial $G$-sets); their sets of simplices will be endowed with the counting measure.
\item In case $G$ is second-countable and non-discrete, we will need to replace $\Delta$-complexes by a class that is better adapted to a measurable setting, namely, semi-simplicial objects in the category of \emph{Lebesgue $G$-spaces}. Here, by a Lebesgue $G$-space we mean a standard Borel space with a Borel $G$-action and a $G$-invariant probability measure class.
\end{itemize}
In order to unify the treatment of the two cases we will refer to the corresponding complexes as \emph{Lebesgue $G$-complexes} in either case. To a Lebesgue $G$-complex $X = (X_q)_{q \geq 0}$, we can associate a complex
\[
0 \to \mathbb R \to \Linfty(X_0) \to \Linfty(X_1) \to \Linfty(X_2) \to \cdots
\]
of Banach spaces, where the codifferentials are given by alternating sums of dual face maps. Note that in the discrete case, this complex reduces to
\[
0 \to \mathbb R \to \ell^\infty(X_0) \to  \ell^\infty(X_1) \to  \ell^\infty(X_2) \to \cdots
\]
We then say that $X$ is \emph{boundedly $\gamma_0$-acyclic} if the cohomology of this complex vanishes up to degree $\gamma_0$. We also say that $X$ is \emph{essentially $\tau_0$-transitive} if $G$ acts essentially transitively (i.e.\ with a co-null orbit) on $q$-simplices for $q \in \{0, 1, \dots, \tau_0\}$. In this case, we can choose simplices $o_0 \in X_0, \dots, o_{\tau_0} \in X_{\tau_0}$ such that $o_i$ is a face of $o_{i+1}$ for all $i \in \{0, \dots, \tau_0-1\}$, and such that the orbits of $o_0, \dots, o_{\tau_0}$ are co-null in the respective spaces of simplices (see Construction \ref{ConGenericFlag}). We then refer to $(o_0, \dots, o_{\tau_0})$ as a \emph{generic flag} in $X$.

\subsection{A spectral sequence in bounded cohomology}
Our main tool will be the following spectral sequence which connects the continuous bounded cohomology ring of a locally compact group $G$ (always assumed to be second-countable or discrete) to those of the stabilizers of simplices of a generic flag in a highly boundedly acyclic and highly essentially transitive Lebesgue-$G$-complex.
\begin{thmx}\label{BasicSS} Let $X$ be a Lebesgue $G$-complex, which is boundedly $\gamma_0$-acyclic and essentially $\tau_0$-transitive for some $\gamma_0, \tau_0 \in \mathbb N \cup \{\infty\}$. Moreover, let  $(o_{0}, \dots, o_{\tau_0})$ be a generic flag in $X$ with stabilizers $H_i := \stab_G(o_i)$ for $i \in \{0, \dots, \tau_0\}$, and set $H_{-1} := G$. 
\begin{enumerate}[leftmargin=3.6pc, label=\emph{(\arabic*)}]
\item For every $p \in \{0,\ldots,\tau_0-1\}$ and $i \in \{0,\ldots,p+1\}$, there exists an element $w_{p,i} \in G$ such that $\delta_i(o_{p+1})= w_{p,i}^{-1} \cdot o_{p}$, that induces an injective homomorphism \vspace{-5pt}
\[
\Int(w_{p,i}): H_{p+1} \to H_{p}, \quad h \mapsto w_{p, i}h w_{p, i}^{-1}. 
\]
\item There exists a first-quadrant spectral sequence $\EE_\bullet^{\bullet,\bullet}$ such that:
\begin{enumerate}[label = \emph{(\roman*)},leftmargin=25pt]
\item $\EE_\infty^t = 0$ for every $t \in \{0, \dots, \gamma_0 + 1\}$.
\item $\EE_1^{p,q} = \Hb^q(H_{p-1})$ for all $p \in \{0, \dots \tau_0+1\}$ and $q\geq 0$.
\item $\EE_2^{p,0} = 0 \; \text{for all }p \in \{0, \dots \tau_0+1\}$.
\item The first page differentials $\dd_1^{0,q}:  \Hb^q(G) \to \Hb^q(H_{0})$ are induced by the inclusion $H_{0} \hookrightarrow G$ for all $q \geq 0$.
\item The first page differentials $\dd_1^{p,q}:  \Hb^q(H_{p-1}) \to \Hb^q(H_{p})$ are the alternating sums \vspace{-10pt}
\[
\dd_1^{p,q} = {\textstyle \sum_{i=0}^{p}} (-1)^i \cdot \Hb^q(\Int(w_{p-1, i})).
\]
for all $p \in \{1, \dots, \tau_0\}$, $q\geq 0$, and any choice of elements $w_{p,i} \in G$ as in \emph{(1)}.
\end{enumerate}
\end{enumerate}
\end{thmx}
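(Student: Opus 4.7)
The plan is to build a first-quadrant bicomplex of Banach $G$-modules by combining $X$ with the standard continuous bar resolution of $\mathbb{R}$, and to read the desired spectral sequence off the column filtration. Concretely, set
\[
D^{p,q} := \Linftywa(G^{q+1};\, \Linfty(X_{p-1}))^G \ (p \geq 1), \qquad D^{0,q} := \Linftywa(G^{q+1})^G,
\]
with vertical differentials the standard bar codifferentials and horizontal differentials induced, coefficient-wise, by the augmentation $\mathbb{R} \hookrightarrow \Linfty(X_0)$ (from $p=0$ to $p=1$) and by the alternating sum of dual face maps $\Linfty(X_{p-1}) \to \Linfty(X_p)$ (for $p \geq 1$). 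Let $\EE_\bullet^{\bullet,\bullet}$ denote the column-filtration spectral sequence of $D^{\bullet,\bullet}$.

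Taking vertical cohomology first, the $p$-th column is a continuous bar resolution computing $\Hb^q(G; \Linfty(X_{p-1}))$ for $p \geq 1$ and $\Hb^q(G)$ for $p = 0$. Essential $\tau_0$-transitivity identifies $\Linfty(X_{p-1})$ with the coinduced module $\Linfty(G/H_{p-1})$ (up to a null set) for $p \in \{1,\dots,\tau_0+1\}$, and the continuous Eckmann--Shapiro lemma---Burger--Monod in the second-countable case, standard in the discrete one---then yields $\Hb^q(G;\Linfty(G/H_{p-1})) \cong \Hb^q(H_{p-1})$. With the convention $H_{-1}:=G$ this is exactly (2)(ii).

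For (2)(i) I would appeal to the row-filtration spectral sequence on the same bicomplex. The $G$-equivariant identification $G^{q+1} \cong G \times G^q$ (translation on the first factor, trivial on the second) produces a canonical isomorphism $\Linftywa(G^{q+1};V)^G \cong \Linftywa(G^q;V)$ for any coefficient module $V$, so row $q$ of $D^{\bullet,\bullet}$ is $\Linftywa(G^q;-)$ applied term-by-term to the augmented complex $0 \to \mathbb{R} \to \Linfty(X_0) \to \Linfty(X_1) \to \cdots$. The latter is exact in degrees $\leq \gamma_0$ by hypothesis, and $\Linftywa(G^q;-)$ is an exact functor in the relevant Banach-module sense, so each row is exact in the same range. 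Hence the row-filtration $\EE_1$-page vanishes for $p \in \{0,\dots,\gamma_0+1\}$, and since both spectral sequences abut to the same total cohomology, $\EE_\infty^t = 0$ for $t \in \{0,\dots,\gamma_0+1\}$.

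The remainder is bookkeeping. For (1), essential transitivity supplies $w_{p,i} \in G$ with $\delta_i(o_{p+1}) = w_{p,i}^{-1} o_p$, and $G$-equivariance of $\delta_i$ forces $\Int(w_{p,i})$ to inject $H_{p+1}$ into $H_p$, which is precisely the content encoded in Construction \ref{ConGenericFlag}. The first-page differential $\dd_1^{p,q}$ is induced by the horizontal coefficient map $\sum_i (-1)^i \delta_i^*$; tracking this through Eckmann--Shapiro identifies each $\delta_i^*$ with $\Hb^q(\Int(w_{p-1,i}))$, giving (2)(v), while at $p=0$ the augmentation $\mathbb{R} \to \Linfty(G/H_0)$ becomes restriction $\Hb^q(G) \to \Hb^q(H_0)$, giving (2)(iv). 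Claim (2)(iii) then follows by computing $\dd_1^{\bullet,0}$: since $\Hb^0$ of every group is $\mathbb{R}$ with trivially-functorial restriction, the row $q=0$ of $\EE_1$ is an augmented alternating-sum complex of copies of $\mathbb{R}$, which is contractible, forcing $\EE_2^{p,0}=0$. The main obstacles I foresee are (a) justifying the exactness of $\Linftywa(G^q;-)$ in the precise Banach-module sense needed to transport $\gamma_0$-acyclicity row by row, and (b) the measure-theoretic care required to apply Eckmann--Shapiro in the second-countable non-discrete setting; both are tractable via Burger--Monod's formalism and represent the places where the second-countability hypothesis earns its keep.
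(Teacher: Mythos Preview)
Your proposal is correct and follows essentially the same route as the paper: the same double complex $\Linfty(G^{q+1};\Linfty(X_{p-1}))^G$, the same two-filtration comparison (with the row-filtration collapsing via exactness of $\Linfty(G^n;-)^G$, which is the paper's Lemma~\ref{thm:exactftr}), and the same Eckmann--Shapiro identification of the column-filtration $\EE_1$-terms and differentials (the paper's Proposition~\ref{propisomstabil} and Lemma~\ref{thm:eckmann_facemap}). The two obstacles you flag are precisely the points where the paper invokes Monod's book and Monod--Nariman, so your instincts are on target.
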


\begin{rem}
A version of this spectral sequence for product complexes (i.e.\ complexes where $X_r = X_0^{r+1}$ with the usual differential) of locally compact second-countable groups was first studied by Monod in \cite{Monod-Stab}. The version for uncountable discrete groups played a key role in work by Monod and Nariman \cite{Monod-Nariman} (see \cite[Theorem 3]{Monod-Nariman}) on the computation of the bounded cohomology of various homeomorphism and diffeomorphism groups of manifolds. 
\end{rem}

\subsection{Measurable Quillen families}
Assume that we are given an infinite family $(G_r)_{r\geq 0}=(G_0 < G_1 < G_2 < \cdots)$ of locally compact groups which are either all second-countable or all discrete and
a Lebesgue $G_r$-complex $X(r)$ for every $r \geq 0$. We can then define counterparts to the Quillen conditions (Q1) and (Q2) as follows. We assume that we are given functions $\gamma, \tau: \mathbb N \to \mathbb N \cup \{\pm \infty\}$ such that
\begin{enumerate}[leftmargin=3.6pc, label=(MQ\arabic*)]
	\item $X(r)$ is boundedly $\gamma(r)$-acyclic.
	\item $X(r)$ is essentially $\tau(r)$-transitive.
\end{enumerate} 
These assumptions imply by Theorem \ref{BasicSS} that, for each $r \geq 0$, we obtain elements $w_{r,p,i} \in G_r$ for all $p \in \{0,\ldots,\tau(r)\}$ and $i \in \{0,\ldots,p+1\}$ as in Part (1) of Theorem \ref{BasicSS}, and a spectral sequence as in (2), relating the continuous bounded cohomology of $G_r$ to the continuous bounded cohomology of the stabilizers $G_r=: H_{r,-1} > H_{r,0} > \dots >H_{r,\tau(r)}$ of a generic flag $(o_{r,0}, \dots, o_{r, \tau(r)})$ in $X(r)$. To obtain a stability result in the vein of Quillen's, we need to relate the bounded cohomology of these stabilizers to the one of the previous groups in the sequence. We offer three different compatibility conditions which are useful in different situations and are all sufficient to obtain bc-stability:

\begin{enumerate}[leftmargin=3.7pc, label=(MQ3\alph*)]
\item For all $r \geq 0$ and $p \in \{-1,0, \dots, \tau(r)\}$, we have $H_{r,p} = G_{r-p-1}$  (we convene that $G_{k}$ is the trivial group for $k<0$), and the conjugation ${\rm Int}(w_{r,p,i}): H_{r,p+1} \to H_{r,p}$ equals the inclusion $\iota_{r-p-2}:G_{r-p-2} \hookrightarrow G_{r-p-1}$ for all $r$, all $p \leq \tau(r)-1$ and all $i$. 

\item For all $r \geq 0$ and $p \in \{-1,0, \dots, \tau(r)\}$, there is an epimorphism
$
	\pi_{r,p}: H_{r,p} \twoheadrightarrow G_{r-p-1} 
$
with amenable kernel such that \vspace{-7pt}
\begin{equation*}
		\begin{gathered}	
		\xymatrixcolsep{3.5pc}
		\xymatrix@R=13pt{H_{r,p+1} \ar@{^{(}->}[r]^{{\rm Int}(w_{r,p,i})}  \ar@{->>}[d]_{\pi_{r,p+1}} & H_{r,p} \ar@{->>}[d]^{\pi_{r,p}} \\
		G_{r-p-2} \ar@{^{(}->}[r]^{\iota_{r-p-2}} & G_{r-p-1}} \vspace{-5pt}
		\end{gathered}
\end{equation*}
is a commutative diagram for all $r,$ all $p \leq \tau(r)-1$, and all $i$.  

\item For all $r \geq 0$ and $p \in \{-1,0, \dots, \tau(r)\}$, there is an epimorphism
$
	\pi_{r,p}: H_{r,p} \twoheadrightarrow G_{r-p-1} 
$
with amenable kernel and a continuous homomorphic section $\sigma_{r,p}$ of $\pi_{r,p}$ such that \vspace{-7pt}
\begin{equation*}
		\begin{gathered}	
		\xymatrixcolsep{3.5pc}
		\xymatrix@R=13pt{H_{r,p+1} \ar@{^{(}->}[r]^{{{\rm Int}(w_{r,p,i})}} & H_{r,p} \ar@{->>}[d]^{\pi_{r,p}} \\
		G_{r-p-2} \ar[u]^{\sigma_{r,p+1}} \ar@{^{(}->}[r]^{\iota_{r-p-2}} & G_{r-p-1}} 
		\end{gathered}
\end{equation*}
is a commutative diagram for all $r$, all $p \leq \tau(r)-1$, and all $i$.
\end{enumerate}
Here, (MQ3a) is Quillen's original condition: indeed, the equality ${\rm Int}(w_{r,p,i})=\iota_{r-p-2}$ means that the element $w_{r,p,i} \in G_r$ centralizes $G_{r-p-2} < G_r$. The conditions (MQ3b) and (MQ3c) are two different relaxations, which take advantage of the fact that continuous surjections with amenable kernel induce isomorphisms in continuous bounded cohomology. The latter is the most technical, but also most useful condition, as witnessed by Example \ref{GLnExample} below.

\begin{defn}\label{DefInfiniteQuillen} We say that $(G_r,X(r))_{r \geq 0}$ is a \emph{measurable $(\gamma, \tau)$-Quillen family} if conditions (MQ1) and (MQ2), and either (MQ3a), (MQ3b) or (MQ3c) hold.
\end{defn}
The following theorem is the main qualitative result of this article:
\begin{thmx}[Quillen stability for continuous bounded cohomology]\label{MainThmQual} Assume that  $(G_r,X(r))_{r \geq 0}$ is a measurable $(\gamma, \tau)$-Quillen family. If both $\gamma$ and $\tau$ are proper, then $(G_r)_{r \geq 0}$ is bc-stable.
\end{thmx}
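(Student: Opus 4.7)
The plan is to establish bc-stability by induction on the cohomological degree $q$, using for each fixed $q$ the spectral sequence of Theorem \ref{BasicSS} applied to the ambient group $G_n$ of the family, for $n$ sufficiently large. The preparatory step, carried out once and for all, is to translate the $E_1$-page into the language of the groups $G_m$ of the chain. Under (MQ3a) this is immediate. Under (MQ3b) or (MQ3c), one invokes the standard fact that a continuous surjection of locally compact (second-countable or discrete) groups with amenable kernel induces a natural isomorphism in continuous bounded cohomology; combined with the commutative squares in those conditions, this forces each map $\Hb^q(\Int(w_{n, p-1, i}))$, once transported through the $\pi^\ast$-isomorphisms $\Hb^q(G_{n-p-1}) \cong \Hb^q(H_{n,p})$, to coincide with the restriction $\iota^\ast: \Hb^q(G_{n-p+1}) \to \Hb^q(G_{n-p})$, independently of $i$. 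Since $\sum_{i=0}^{p}(-1)^i$ equals $1$ for $p$ even and $0$ for $p$ odd, the first-page differentials from Theorem \ref{BasicSS}(2)(v) collapse to
\begin{equation*}
\dd_1^{p,q} \;=\; \begin{cases} \iota^\ast : \Hb^q(G_{n-p}) \to \Hb^q(G_{n-p-1}) & \text{if } p \text{ is even,} \\ 0 & \text{if } p \text{ is odd,} \end{cases}
\end{equation*}
while part (2)(iv) of the same theorem gives $\dd_1^{0,q} = \iota^\ast$ directly.

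Accordingly, the row-$q$ entries of $E_2$ for $G_n$ read $E_2^{0,q} = \ker \iota^\ast$ and $E_2^{1,q} = \coker \iota^\ast$ for the restriction $\iota^\ast: \Hb^q(G_n) \to \Hb^q(G_{n-1})$, while for $p \geq 2$ the entry $E_2^{p,q}$ is a kernel or cokernel of a restriction between two consecutive members of the chain further down. The induction then proceeds in Quillen's classical style. The cases $q \in \{0, 1\}$ are automatic, since $\Hb^0(\cdot) \cong \bbR$ and $\Hb^1(\cdot) = 0$ for every group at hand. For the inductive step, assume bc-stability with stability range $r(q')$ in every degree $q' < q$, and use properness of $\gamma$ and $\tau$ to choose $n$ large enough that $\gamma(n) \geq q$, $\tau(n) \geq q+1$, and $n - p \geq r(q')$ for every $(p, q')$ with $p \in \{2, \ldots, q+2\}$ and $q' \in \{2, \ldots, q-1\}$. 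Combined with Theorem \ref{BasicSS}(2)(iii) (which handles $q' = 0$) and the vanishing of $\Hb^1$ (which handles $q' = 1$), the inductive hypothesis forces $E_2^{p, q'} = 0$ for all $p \geq 1$ and $q' < q$. Every higher-page differential with source $E_s^{0,q}$ or $E_s^{1,q}$ (for $s \geq 2$) then lands in a subquotient of one of these vanishing entries, so all such differentials vanish; hence $E_\infty^{0,q} = E_2^{0,q}$ and $E_\infty^{1,q} = E_2^{1,q}$. Applying Theorem \ref{BasicSS}(2)(i) in total degrees $q$ and $q+1$, both of which lie in $\{0, \ldots, \gamma(n)+1\}$, forces both of these $E_\infty$-terms to be zero, which is exactly the injectivity and surjectivity of $\iota^\ast$ in degree $q$. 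Iterating over the subsequent members of the chain then yields bc-stability in degree $q$ with stability range bounded by this choice of $n$.

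The main technical obstacle lies in the preparatory step. Continuous bounded cohomology is a delicate derived functor in a category of Banach modules, and under (MQ3b)/(MQ3c) one must verify that the amenable-kernel isomorphism is compatible with the equivariant bar-type resolutions underlying the spectral sequence of Theorem \ref{BasicSS}, so that the identification of $\dd_1$ with $\iota^\ast$ holds at the level of the spectral sequence itself and not merely abstractly on individual cohomology groups. This is precisely where the hypothesis that each $G_n$ is second-countable or discrete enters. Once this step is secured, the induction is essentially bookkeeping, and the constraints above recursively define an explicit stability function $r(q)$ whose finiteness at every $q$ is guaranteed by the properness of $\gamma$ and $\tau$.
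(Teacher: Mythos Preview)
Your argument is correct and follows essentially the same spectral-sequence induction as the paper. The only structural difference is that the paper first proves a quantitative version (Theorem~\ref{MainTheorem}) tracking both the isomorphism in degree $q$ \emph{and} injectivity in degree $q+1$ simultaneously (the statements $(S_{q,r})$), and then derives Theorem~\ref{MainThmQual} as a corollary; you instead run the induction directly on the qualitative statement, using only isomorphisms in lower degrees as the hypothesis and compensating by taking $n$ large enough. Both approaches rest on the same identification of the $E_1$-page (your preparatory step is exactly Corollary~\ref{SpectralSequencesMain}) and the same vanishing pattern on the diagonal $p'+q'=q+2$.

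Two minor remarks. First, there is an off-by-one slip in your indexing: the transported map should read $\iota^\ast:\Hb^q(G_{n-p})\to\Hb^q(G_{n-p-1})$, and the claim ``$E_2^{p,q'}=0$ for all $p\ge 1$'' should be restricted to $p\le\tau(n)+1$, which is all you actually use. Second, your closing paragraph overstates the difficulty: once Theorem~\ref{BasicSS} is in hand, the $E_1$-terms and $d_1$-differentials are already expressed as bounded-cohomology groups and induced maps, so the amenable-kernel isomorphism enters only through the functoriality of $\Hb^q$ and the commutative squares in (MQ3b)/(MQ3c)---no further compatibility with resolutions is needed (cf.\ the two-line proof of Corollary~\ref{SpectralSequencesMain}). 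The second-countability/discreteness hypothesis is used earlier, in setting up the coefficient modules and the spectral sequence itself.
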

Theorem \ref{MainThmQual} will be established by comparing the various spectral sequences associated to the Lebesgue complexes $X(r)$ by means of Theorem \ref{BasicSS}. This will actually provide a quantitative version of Theorem \ref{MainThmQual}: we will obtain an explicit stability range which depends only on the functions $\gamma$ and $\tau$. This quantitative version actually also makes sense for finite Quillen families. We will state the most general version of our main theorem as Theorem \ref{MainTheorem} below.

For \emph{product complexes}, i.e. complexes of the form $X(r)_q = X(r)_0^{q+1}$ with the forgetful face maps, Theorem \ref{MainThmQual} and its quantitative version were essentially established by Monod in \cite{Monod-Stab}, and our proof is an extension of Monod's original proof. As we will explain after Example \ref{GLnExample}, our more general version is crucial if one wants to generalize Monod's results concerning bc-stability of general linear groups to other classes of classical groups.

\begin{rem}
The isomorphisms produced by Theorem \ref{MainThmQual} are generally not isometric with respect to the Gromov norms. This fact is evidenced, for instance, by the norms of the bounded Borel classes in complex simple Lie groups of classical type, which were computed in works by Bucher, Burger, and Iozzi \cite{BBI} and the first author \cite{DM-Thesis,DM-Complex}. 
\end{rem}

\subsection{The discrete case}
Recall that Theorem \ref{MainThmQual} applies to the case in which $(G_r)_{r \geq 0}$ are discrete groups. In this case, the statement simplifies considerably, and we record this special case for ease of reference. The sequence $(X(r))_{r \geq 0}$ is just a family of $\Delta$-complexes, and the sets of simplices of $X(r)$ are all endowed with the respective counting measures. To prove rational homological stability of $(G_r)_{r \geq 0}$ using Quillen's criterion, it suffices to show that $G_r$ acts $\tau(r)$-transitively on $X(r)$ with stabilizers $H_{r,p} = G_{r-p-1}$ and that $X(r)$ is rationally $\gamma(r)$-acyclic for two proper functions $\tau, \, \gamma: \bbN \to \bbN \cup \{\pm \infty\}$. One way to do this is by exhibiting a partial contracting chain homotopy $h_*$ for each of the complexes
\[
\dots \to \bbQ X(r)_1 \xrightarrow{d_0} \bbQ X(r)_0 \xrightarrow{d_{-1}} \bbQ \to 0
\]
so that $d_k \circ h_k + h_{k-1} \circ d_{k-1}= 1$. These chain homotopies will then induce a partial dual contracting chain homotopy of the dual cocomplexes
\[
0 \to \bbR \xrightarrow{d^{-1}} \ell^\infty(X(r)_0) \xrightarrow{d^0} \ell^\infty(X(r)_1) \to \dots
\]
provided that for every $k \in \bbN$ there is a constant $C_k > 0$ such that for
every simplex $\sigma \in X_k$ the $\ell^1$-norm $\|h_k(\sigma)\|_1$ is bounded by $C_k$, i.e. 
\[
h_k(\sigma) = \sum_{i=1}^{n_\sigma} \alpha_i \tau_i \implies  \sum_{i=1}^{n_\sigma} |\alpha_i| \leq C_k.
\]
In this case we refer to $h_*$ as a \emph{rational $\ell^1$-homotopy}.
\begin{corx}\label{Discrete} Let $(G_r)_{r\geq 0}$ be a family of discrete groups. For every $r \geq 0$, let $X(r)$ be a $\gamma(r)$-acyclic $\Delta$-complex with a $\tau(r)$-transitive $G_r$-action and stabilizers $H_{r,q} = G_{r-q-1}$ which are compatible in the sense of (MQ3a).
\begin{enumerate}[label = \emph{(\roman*)},leftmargin=25pt]
\item If $\gamma, \, \tau: \bbN \to \bbN \cup \{\pm \infty\}$ are proper, then $(G_r)_{r \geq 0}$ is homologically stable.
\item If, moreover, rational $\gamma(r)$-acyclicity of $X(r)$ is witnessed by a rational $\ell^1$-homotopy, then $(G_r)_{r \geq 0}$ is also bc-stable. 
\end{enumerate}
\end{corx}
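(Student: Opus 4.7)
The plan is to reduce both parts to existing results: part (i) to Quillen's classical stability criterion, and part (ii) to our Theorem \ref{MainThmQual}.

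For part (i), the hypotheses are precisely those of Quillen's original criterion (with stability range extracted from $\gamma$ and $\tau$), so I would invoke the classical statement from Quillen's notes \cite{Quillen}, as treated for instance in \cite{Bestvina,Sprehn-Wahl2}.

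For part (ii), the task is to verify the hypotheses of Theorem \ref{MainThmQual}. Conditions (MQ2) and (MQ3a) hold by assumption (since in the discrete setting with counting measure, $\tau(r)$-transitivity and essential $\tau(r)$-transitivity coincide), so the only nontrivial point is (MQ1): that $X(r)$ is \emph{boundedly} $\gamma(r)$-acyclic. I would dualize the given rational $\ell^1$-homotopy: if $h_k(\sigma) = \sum_{i=1}^{n_\sigma} \alpha_i \tau_i$ with $\sum_i |\alpha_i| \leq C_k$, define $h^k: \ell^\infty(X(r)_{k+1}) \to \ell^\infty(X(r)_{k})$ by
\[
(h^k f)(\sigma) \ = \ \sum_{i=1}^{n_\sigma} \alpha_i\, f(\tau_i).
\]
The $\ell^1$-bound ensures $\|h^k f\|_\infty \leq C_k \|f\|_\infty$, so $h^k$ is a bounded operator. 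Applying $\Hom(-,\bbR)$ to the chain homotopy identity $d_k \circ h_k + h_{k-1} \circ d_{k-1} = \id$ on $\bbQ X(r)_k$ yields $h^k \circ d^k + d^{k-1} \circ h^{k-1} = \id$ on $\ell^\infty(X(r)_k)$ for $k \leq \gamma(r)$. This exhibits vanishing of the cohomology of $0 \to \bbR \to \ell^\infty(X(r)_0) \to \ell^\infty(X(r)_1) \to \cdots$ up to degree $\gamma(r)$, establishing (MQ1).

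With all three conditions verified, $(G_r, X(r))_{r\geq 0}$ is a measurable $(\gamma,\tau)$-Quillen family, and since $\gamma$ and $\tau$ are proper, Theorem \ref{MainThmQual} yields bc-stability. The only delicate point is the bookkeeping for the dualization — making sure signs and indices match under $\Hom(-,\bbR)$, and that the pointwise definition of $h^k$ actually lands in $\ell^\infty$ (which is where the uniform bound $C_k$, independent of $\sigma$, is essential). Everything else is formal.
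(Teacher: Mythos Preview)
Your proposal is correct and matches the paper's approach exactly: part (i) is the classical Quillen criterion, and part (ii) follows from Theorem \ref{MainThmQual} once (MQ1) is verified by dualizing the rational $\ell^1$-homotopy, precisely as the paper outlines in the paragraph preceding the corollary. The paper does not give a separate formal proof beyond that discussion, so your write-up is in fact more detailed than what appears there.
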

Again, the stability range can be computed explicitly in terms of $\gamma$ and $\tau$; see Theorem \ref{MainTheorem}. Corollary \ref{Discrete} provides a clear strategy to establish bc-stability for all families of countable groups for which homological stability has been established using Quillen's method: One just has to check whether the underlying homotopy is rationally $\ell^1$. Unfortunately, the homotopies in question are usually only given implicitly. We thus leave it to future work to determine for which countable groups an explicit rational $\ell^1$-homotopy can be constructed.

\subsection{The case of reductive Lie groups}
Our main interest in establishing Theorem \ref{MainThmQual} in its present generality was to provide a framework for establishing bc-stability of the classical families of reductive Lie groups. For all of these families, bc-stability is predicted by the (notoriously open) isomorphism conjecture in continuous bounded cohomology. However, prior to this work, this predicted stability had only been established for general and special linear groups over $\bbR$ and $\bbC$ by Monod \cite{Monod-Stab} using a special case of Theorem \ref{MainThmQual}. Let us briefly explain how the results for general linear groups\footnote{As explained by Monod in the Note before Lemma 10 in \cite{Monod-Vanish}, the stability range given in \cite{Monod-Stab} is not correct due to an inaccuracy in the induction step. We use this opportunity to state the correct range; see \cite[Sect. 6.3 and 6.4]{DM-Thesis} for details. Also, the quaternionic case is absent from \cite{Monod-Stab}, but it does not require any new ideas.} fit into our present context:
\begin{exmpl}[General linear groups]\label{GLnExample} Let $G_r := \mathrm{GL}_r(\mathbb K)$ with $\bbK \in \{\bbR, \bbC, \bbH\}$. These groups constitute an infinite family $(G_0 < G_1 < G_2 < \cdots )$ of lcsc groups with the block inclusions
\begin{equation} \label{GLn_inclusion}
	\iota_r: G_r \hookrightarrow G_{r+1}, \quad A \mapsto 1 \times A := \begin{pmatrix} 1 & 0 \\ 0 & A \end{pmatrix}.
\end{equation}
Let $X(r)_q := \mathbb P(\mathbb K^r)^{q+1}$, where $\mathbb P(\mathbb K^r)$ denotes the projective space of $\mathbb K^r$ and the face maps are the usual forgetful maps. If we equip each of these spaces with its canonical Lebesgue measure class, then each $X(r)$ becomes a boundedly acyclic Lebesgue $G(r)$-complex. %Indeed, after choosing a probability measure $\mu$ in the Lebesgue measure class of $\bbP(\bbK^r)$, a contracting homotopy for the augmented $\Linfty$-complex is given by integration over the first variable with respect to $\mu$.
Moreover, the action of $G_r$ on $X(r)$ is essentially $r$-transitive. If $e_1, \dots, e_r$ denotes the standard basis of $\bbK^r$ and $e_{r+1} = \sum_{i=1}^r e_i$, then a generic flag in $X(r)$ is given by $(o_0, \dots,o_{r})$, where $o_q := ([e_1], \dots, [e_{q+1}])$. The corresponding stabilizers $H_{r,q} = \stab_{G_r}(o_q)$ are 
\[
H_{r,q} = \left\{\begin{pmatrix} D & V \\ 0 & A \end{pmatrix} \ \Bigg| \ D \in G_{q+1} \mbox{ diagonal}, \ A \in G_{r-q-1}, \ V \in \mathrm{M}_{(q+1) \times (r-q-1)}(\bbK) \right\}
\]
for $q \in [r-1]$, and $H_{r,r} = (\bbK^\times) \cdot I_r$; we also set $H_{r,-1} = G_r$. For $q \in \{0,\ldots,r-2\}$ and $i \in \{0,\ldots,q+1\}$, we let $w_{r,q,i} \in G_r$ be the matrix that permutes the standard basis vectors via the cycle $(q+2,q+1,\ldots,i+1)$, and for $q = r -1$, we fix an arbitrary choice of $w_{r,r-1,i} \in G_r$ according to Theorem \ref{BasicSS} (1). With this notation, condition (MQ3a) is violated, and while the obvious projections $\pi_{r,q}: H_{r,q} \to G_{r-q-1}$ do have amenable kernel, they do not satisfy Condition (MQ3b) since the corresponding diagram does not commute. In turn, Condition (MQ3c) is satisfied with the homomorphic sections \vspace{-3pt}
\[
	\sigma_{r,q}:G_{r-q-1} \to H_{r,q}, \quad A \mapsto I_{q+1} \times A, \vspace{-3pt}
\]
of $\pi_{r,q}$. This is the reason why we insist on this slightly technical compatibility condition. 

We may now deduce that $(G_r, X(r))_{r\geq 0}$ is a measurable $(\infty, r)$-Quillen family, hence $(G_r)$ is bc-stable. An explicit stability range can be read off from  Theorem \ref{MainTheorem} below. In the notation of that theorem, we have $q_0 = 2$, $\gamma(r) = \infty$ and $\tau(r) = r$ (since $\Hb^2(G_r) = 0$ for all $r \geq 0$; this follows from \cite{Burger-Monod3} and \cite[Cor. 8.8.6, Ex. 9.9.3]{Monod-Book}), and hence\vspace{-3pt}
\[
	\min\{\widetilde{\gamma}(q,r), \widetilde{\tau}(q,r)-1\} = \min_{j=q_0}^{q} \big\{\tau\big(r+1-2(q-j)\big) - j\big\}-1= r-2q+q_0 = r-(2q-2).\vspace{-3pt}
\]
We deduce that for every $q \geq q_0+1 = 3$, the inclusions $\iota_r$ induce isomorphisms and injections
\[
\dots \cong \Hb^q(\GL_{2q-1}(\bbK)) \cong \Hb^{q}(\GL_{2q-2}(\bbK)) \hookrightarrow \Hb^{q}(\GL_{2q-3}(\bbK)) \hookrightarrow \Hb^{q}(\GL_{2q-4}(\bbK)).
\]
For the general linear groups, we thus obtain bc-stability with slope two.
\end{exmpl}
In Example \ref{GLnExample}, the complexes $X(r)$ can be chosen to be a product complexes, since $\GL_r$ acts essentially $r$-transitively on projective space. It is known that reductive Lie groups whose semisimple part is of type other than $A_n$ do not admit highly transitive actions on generalized flag manifolds; see work by Popov \cite{Popov}. Thus the product version of Theorem \ref{MainThmQual} is insufficient to establish bc-stability for other classical groups. This was our main motivation in stating Theorem \ref{MainThmQual} in its present form.

\begin{exmpl}[Symplectic, orthogonal, and unitary groups] \label{Sp2nExample}
For $\bbK \in \{\bbR,\bbC\}$ and $d \in \bbN$, let $(G_r)_{r\geq 0}$ denote one of the following classical families of Lie groups 
\begin{equation*}
\begin{array}{rrrrrr}
	\{1\} <& \!\! \Sp_2(\bbK) <& \!\!\Sp_4(\bbK) <& \!\! \cdots <& \!\! \Sp_{2r}(\bbK) <& \!\! \cdots \\[2pt]
	\OO_{d,\,0}(\bbK) <& \!\! \OO_{d+1,\,1}(\bbK) <& \!\! \OO_{d+2,\,2}(\bbK) <& \!\! \cdots <& \!\! \OO_{d+r,\,r}(\bbK) <& \!\! \cdots \\[2pt]
	\UU_{d,\,0}(\bbC) <& \!\! \UU_{d+1,\,1}(\bbC) <& \!\! \UU_{d+2,\,2}(\bbC) <& \!\! \cdots <& \!\! \UU_{d+r,\,r}(\bbC) <& \!\! \cdots
\end{array}
\end{equation*}
In our article \cite{DM+Hartnick}, we are going to apply Theorem \ref{MainThmQual} to establish bc-stability for all of these families $(G_r)$. Our proof is based on the fact that all of the groups above arise a 
 automorphism groups of sesquilinear forms. With each of the groups $G_r$ above, one can thus associate a corresponding \emph{Stiefel complex} in the sense of Vogtmann \cite{Vogtmann}, and as in the case of the general linear groups, it is easy to see that the actions are $(r-1)$-transitive and compatible (in the sense of (MQ3c)). In order to establish bc-stability, one thus only has to show that the Stiefel complex $X_r$ is mesurably $\gamma(r)$-acyclic for some proper function $\gamma$. This is established in our article \cite{DM+Hartnick} using probabilistic methods, leading to an acyclicity range of the form $\gamma(r) \sim \log_2(r)$. 
\end{exmpl}
\subsection{The need for finite measurable Quillen families}
Theorem \ref{MainThmQual} does not apply directly to the families of special linear (or special orthogonal or special unitary) groups. The problem is that while $G_r= \SL_r(\bbK)$ acts highly transitively on the product complex $X(r)$ defined as in Example \ref{GLnExample}, the quotient of the stabilizer $H_{r,q}$ by its radical is isomorphic to $\GL_{r-q-1}(\bbK)$ instead of $\SL_{r-q-1}(\bbK)$. 

To deal with this problem, we introduce in Section \ref{SecQuillenFamily} below the notion of a finite measurable Quillen family. It then turns out that (for $\bbK \in \{\bbR, \bbC, \bbH\}$), the family
\[
\{1\} < \GL_1(\bbK) < \GL_2(\bbK) < \dots < \GL_{R-1}(\bbK) < \SL_R(\bbK) 
\]
is a finite measurable Quillen family in this sense, and our main Theorem \ref{MainTheorem} also applies. Assuming the theorem and its notation, we recover Monod's results for $\SL_n$.

\begin{exmpl}[Special linear groups] \label{SLnExample}
Let $\bbK \in \{\bbR,\bbC,\bbH\}$ and $R \geq 2$. We set $G_r := \GL_r(\bbK)$ for any $r \in [R-1]$ and $G_R := \SL_R(\bbK)$. The inclusions $\iota_r:G_r \hookrightarrow G_{r+1}$ are defined as in \eqref{GLn_inclusion} for $r < R-1$, and as
\[
	A \mapsto (\det A)^{-1} \times A
\]
for $r = R-1$. We consider the same acyclic product complexes $X(r)$ as in Example \ref{GLnExample}; then the action of $G_r$ on $X(r)$ is $\tau(r)$-transitive, where $\tau(r) = r$ if $r < R$ and $\tau(R)= R-1$. The group elements $w_{r,q,i} \in G_r$ can be chosen as in Example \ref{GLnExample} for $r <R$, but for $r = R$ one requires a sign adjustment to guarantee that the matrices lie in $G_R$. In the notation of Theorem \ref{MainTheorem} the initial condition is again given by $q_0 = 2$ if $R \geq 3$ and $q_0 = 1$ if $R = 2$. Assume first that $R \geq 3$. Then
\begin{align*}
	\min\{\widetilde{\gamma}(q,R-1), \widetilde{\tau}(q,R-1)-1\} &= \min_{j=q_0}^{q} \big\{\tau\big(R-2(q-j)\big) - j\big\}-1 \\
	&= \min\left\{\min_{j=q_0}^{q-1}\big\{R-2q+j\big\},R-1-q\right\} -1 \\
	&= \min\{R-2q+2,R-q-1\}-1 = R-(2q-2),
\end{align*}
and, hence, the map $\iota_R$ induces
\begin{enumerate}[label=(\roman*)]
\item an isomorphism $\Hb^q(\SL_R(\bbK)) \cong \Hb^q(\GL_{R-1}(\bbK))$ if $R \geq 2q-2$;
\item an injection $\Hb^q(\SL_R(\bbK)) \hookrightarrow \Hb^q(\GL_{R-1}(\bbK))$ if $R \geq 2q-4$.
\end{enumerate}
Combining this information with Example \ref{GLnExample}, we see that for $q \geq 3$ the natural inclusions induce isomorphisms/injections\footnote{Note that the stability range given in \cite{Monod-Stab} is not correct, since it uses the inaccurate stability range for $\GL_r(\bbK)$.}
\[
 \dots \cong \Hb^q(\SL_{2q}(\bbK)) \cong \Hb^q(\SL_{2q-1}(\bbK)) \cong \Hb^q(\SL_{2q-2}(\bbK)) \hookrightarrow \Hb^q(\GL_{2q-3}(\bbK)).\vspace{-2pt}
\]
For $q = 2$, not covered by the discussion above, is well-known (see \cite{Burger-Monod3} and \cite[Ex. 9.9.3]{Monod-Book}):\vspace{-2pt}
\[
 \dots \cong \Hb^2(\SL_{4}(\bbK)) \cong \Hb^2(\SL_{3}(\bbK)) \cong \{0\} \hookrightarrow   \Hb^q(\SL_{2}(\bbK));\vspace{-2pt}
\]
here the latter inclusion is an isomorphism for $\bbK = \bbC$ or $\bbH$, but not for $\bbK = \bbR$. \vspace{-3pt}
\end{exmpl}
%For similar results concerning special unitary and special orthogonal groups, see \cite{dlCMH2}.
%We write $\Sym_k$ for the group of symmetries of the set $[k]$
{\bf Acknowledgements.} We thank Marc Burger and Nicolas Monod for insightful conversations, and the anonymous referee for valuable feedback on a first version of this article. This work was completed during a postdoctoral fellowship of the first author at the Weizmann Institute of Science, Israel. He was supported by the Swiss National Science Foundation, Grants No. 169106 and 188010. The second author was supported by the Deutsche Forschungsgemeinschaft, Grant No. HA 8094/1-1 within the Schwerpunktprogramm SPP 2026 (Geometry at Infinity).

\section{Spectral sequences associated to Lebesgue complexes} \label{sec:monodcoeff}
Throughout this section, $G$ denotes a locally compact group.  We will assume that $G$ is either second-countable or discrete. The goal of this section is to associate a first-quadrant spectral sequence in continuous bounded cohomology with every partially boundedly acyclic Lebesgue $G$-complex. We explain the terminology used in this statement and recall the necessary background on bounded cohomology, and refer the reader to Monod's book \cite{Monod-Book} for further details. 

\subsection{Coefficient $G$-modules and Lebesgue $G$-spaces}
Following \cite{Monod-Book}, we discuss the class of modules which can be used as coefficients in continuous bounded cohomology. In order to be able to deal with uncountable discrete groups, we have to modify the definitions slightly.

A \emph{Banach $G$-module} is a Banach space over the field of real numbers equipped with a $G$-action by linear isometries, and a \emph{$G$-morphism} is a $G$-equivariant, bounded (but not necessarily isometric) operator between Banach $G$-modules. A Banach $G$-module is said to be \emph{separable} if the underlying Banach space is separable, and \emph{continuous} if the $G$-action is jointly continuous. Given a Banach $G$-module $E$, we denote by $E^\sharp$ the topological dual of $E$, considered as a Banach $G$-module endowed with the contragredient $G$-action. Note that $E^\sharp$ needs not be separable nor continuous, even if $E$ has either of those properties. The following notion is due to Monod \cite{Monod-Book} in the second-countable case:
\begin{defn}\label{DefCoefficientModule}
\begin{enumerate}[label=(\roman*)]
\item If $G$ is non-discrete and second-countable, then a \emph{coefficient $G$-module} is a pair $(E,E^\#)$, where $E$ is a separable, continuous Banach $G$-module.
\item If $G$ is discrete, then a \emph{coefficient $G$-module} is a pair $(E,E^\#)$, where $E$ is a continuous Banach $G$-module (not necessarily separable).
\end{enumerate}
In either case, a \emph{morphism} $(E, E^\#) \to (F, F^\#)$ of coefficient $G$-modules, is a pair $(\Phi,\Phi^\#)$, where $\Phi: E \to F$ is a bounded operator with a $G$-equivariant dual $\Phi^\#: F^\# \to E^\#$. We write ${\bf Coef}_G$ for the category of coefficient $G$-modules.
\end{defn}
\begin{rem}
If we consider a coefficient $G$-module $(E^\flat,E)$, then the only role of the specified pre-dual $E^\flat$  is to define a weak-$\ast$ topology on $E$. We will thus usually drop the pre-dual $E^\flat$ from notation and simply refer to $E$ as a coefficient $G$-module. With this abuse of notation, a morphism between coefficient $G$-modules $E$ and $F$ is simply a $G$-equivariant map $E \to F$, which is continuous with respect to the weak-$*$-topologies defined by the omitted pre-duals.
\end{rem}

The next notion allows us to produce examples of coefficient $G$-modules.

\begin{defn} \label{defn:regspace}
\begin{enumerate}[label=(\roman*)]
\item If $G$ is non-discrete and second-countable, we say that $X$ is a \emph{Lebesgue $G$-space} if it is a standard Borel space endowed with a Borel $G$-action and a $G$-invariant Borel measure class. For economy, we will say that the pair $(X,\mu)$ is a Lebesgue $G$-space whenever $X$ is a Lebesgue $G$-space, and $\mu$ is a fixed $G$-quasi-invariant probability measure within the measure class.
\item If $G$ is discrete, we will say that $X$ is a \emph{Lebesgue $G$-space} if it is a $G$-set, endowed with the measure class of its counting measure.
\end{enumerate}
We denote by ${\bf Leb}_G$ the category whose objects are Lebesgue $G$-spaces and $G$-equivariant, measure-class-preserving Borel maps.
\end{defn}
\begin{exmpl} The space $G^{n}$ is a Lebesgue $G$-space (with respect to the diagonal $G$-action) for every $n \in \mathbb N$. For this to be the case it is crucial that we do not assume a Lebesgue-$G$-space to be countably separated if $G$ is an uncountable discrete group.
\end{exmpl}

\begin{exmpl} \label{LInfty}
Let $X$ be a Lebesgue $G$-space. If $G$ is non-discrete, then we denote by $\mu$ a fixed probability measure in the given measure class on $X$, and if $G$ is discrete, then we denote by $\mu$ the counting measure on $X$. The group $G$ acts on $L^1(X,\mu)$ via
\[
g \cdot \phi(x) := \rho_\mu(g^{-1}, x) \cdot \phi(g^{-1}x),
\]
where $\rho_\mu(g, x) := \dd(g_*\mu)/\dd\!\mu(x)$ denotes the Radon-Nikodym cocycle of $\mu$. In the non-discrete case this action is continuous, as proven by B\"uhler \cite[Appendix D]{Buehler}. Moreover, $L^1(X, \mu)$ is separable in this case (but not necessarily in the discrete case). In either case, the contragredient module is then given by $L^1(X,\mu)^\sharp = L^\infty(X)$ wich action given by $g \cdot f(x) := f(g^{-1}x)$, and our definitions have been set up in such a way that the pair $(L^1(X,\mu), \Linfty(X))$ is a coefficient $G$-module, which up to isomorphism does not depend on the choice of $\mu$. 
\end{exmpl}

\begin{rem}\label{LinftyFunctor}
The assignment $\Linfty(-): {\bf Leb}_G^{\rm op} \to {\bf Coef}_G$ is functorial. In fact, if $T: Y \to X$ is a morphism of Lebesgue $G$-spaces, then the induced operator
\[
	\Linfty(T): \Linfty(X) \to \Linfty(Y), \quad \Linfty(T)(\phi) := \phi \, \circ \,  T
\]
is a morphism of coefficient $G$-modules. Choosing a $G$-quasi-invariant measure $\nu$ on $Y$ as above and letting $\mu:= T_\ast \nu$, the pre-dual operator to $\Linfty(T)$ is
\[
	L^1(T): L^1(Y,\nu) \to L^1(X,\mu), \quad \big( L^1(T)(\psi) \big)(x) := \dd(T_\ast( \psi\!\cdot\!\nu))/\dd\!\mu. 
\]
\end{rem}
We will need the following generalization:
\begin{con}\label{LInftyGeneral}
Let $X$ be a Lebesgue $G$-space and left $E$ be a coefficient $G$-module. We equip $E$ with the Borel $\sigma$-algebra associated with the weak-$*$-topology on $E$. Given $n \in \mathbb N$ we then define
\[
 \Linfty(X;E):=\{\phi: X \to E \: : \: \phi \mbox{ is measurable and essentially bounded}\}/\sim,
\]
where $\sim$ denotes almost everywhere equality. This defines a Banach $G$-module when
equipped with the essential supremum norm and the $G$-action given by the formula $g.f(x) = g.f(g^{-1}x)$. 
In particular, for all $n \in \mathbb N$ we obtain a Banach $G$-module
\[
 \Linfty(G^n;E):=\{\phi: G^n \to E \: : \: \phi \mbox{ is measurable and essentially bounded}\}/\sim.
\]
\end{con}

\begin{rem} Our definition of coefficient $G$-module is made in such a way that if $X$ is a Lebesgue $G$-space and $E$ is a coefficient $G$-module, then $\Linfty(X;E)$ is again a coefficient $G$-module. Indeed, it is the the dual of the Bochner space $L^1(X;E^\flat)$, where $E^\flat$ denotes the underlying pre-dual of $E$. Note that in the discrete case the module $L^1(X;E^\flat)$ need not be separable.

The use of the weak-$*$ Borel structure of $E$ in the definition of $\Linfty(X;E)$ is necessary since $\Linfty(X;E)$ is not well-defined if measurability is taken with respect to its norm-structure. If $G$ is discrete, measurability is a void assumption and $\Linfty(X;E)$ can in fact be defined for \emph{any} Banach $G$-module $E$.

As in the case of the trivial coefficient module $\mathbb R$ one shows that the assignment $X \mapsto \Linfty(X;E)$ is functorial for any fixed coefficient module $E$.
\end{rem}
\subsection{Background on continuous bounded cohomology}
We collect the definition and a series of useful facts on continuous bounded cohomology. 

\begin{defn}
If $E$ is Banach $G$-module and $n \in \mathbb N$, then we denote by $C_{\mathrm{b}}(G^{n+1}; E)$ the Banach space of continuous bounded functions $f: G^{n+1} \to E$ with the supremum norm. The group $G$ acts diagonally on $G^{n+1}$ and hence by isometries on $C_{\mathrm{b}}(G^{n+1}; E)$ via the left-regular representation, and we denote by $C_{\mathrm{b}}(G^{n+1}; E)^G)$ the subspace of $G$-invariant. Then the \emph{continuous bounded cohomology} of $G$ with coefficients in $E$ is defined as 
\[
\Hb^\bullet(G; E) := \HH^\bullet(C_{\mathrm{b}}(G^{\bullet+1}; E)^G).
\]
If $E = \mathbb R$ with trivial $G$-action, we will drop $E$ from notation and write $\Hb^\bullet(G) := \Hb^\bullet(G; \mathbb R)$.
\end{defn}
\begin{prop}\label{MeasRes} If $E$ is a coefficient $G$-module, then
\[
\Hb^\bullet(G; E) = \HH^\bullet(0 \to \Linfty(G; E)^G \to  \Linfty(G^2; E)^G \to  \Linfty(G^3; E)^G \to \dots).
\]
\end{prop}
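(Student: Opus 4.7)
The plan is to recognize this statement as the classical computation of continuous bounded cohomology via the $\Linfty$-bar resolution (Monod, Chapter 7), and to invoke the fundamental lemma of relative homological algebra for coefficient $G$-modules: any strong resolution of $E$ by relatively injective Banach $G$-modules produces the same cohomology after taking $G$-invariants. Since the paper defines $\Hb^\bullet(G;E)$ as the cohomology of $\Cb(G^{\bullet+1};E)^G$, which is exactly such a strong resolution, it suffices to exhibit
\[
0 \to E \to \Linfty(G;E) \to \Linfty(G^2;E) \to \Linfty(G^3;E) \to \cdots
\]
as a second such strong resolution and to compare the two.

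The steps are as follows. First, equip each $\Linfty(G^{n+1};E)$ with the standard alternating-sum coboundary and the augmentation $E \to \Linfty(G;E)$ by constant functions; these are $G$-morphisms of coefficient modules. Second, verify relative injectivity of each $\Linfty(G^{n+1};E)$. This follows from the argument of Monod's Proposition 4.4.1 via the identification $\Linfty(G^{n+1};E) \cong \Linfty(G;\Linfty(G^n;E))$; the equivariant extension is produced by a translation procedure that does not require separability, so the argument transports verbatim to the uncountable discrete case as well. Third, construct a (not necessarily equivariant) contracting chain homotopy of norm $\leq 1$, which is the content of strongness. Fourth, apply the fundamental lemma to conclude that the cohomology of the $G$-invariants of both resolutions agree.

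In the discrete case, strongness is essentially free: the point-evaluation $(h_nf)(g_1,\ldots,g_n) = f(e,g_1,\ldots,g_n)$ at the identity defines a bounded $\mathbb R$-linear operator of norm $1$ satisfying the standard chain-homotopy identity, since every function on a discrete set is automatically measurable and $\Linfty$ involves no equivalence classes modulo null sets in this setting. The main obstacle is the non-discrete second-countable case, where point-evaluation is not defined on $\Linfty$ equivalence classes. I would follow Monod's solution (Chapter 7): using the weak-$*$-structure provided by the pre-dual of the coefficient module, identify $\Linfty(G^{n+1};E)$ with $\Linftyw(G;\Linfty(G^n;E))$ and produce the contracting homotopy through a measurable integration argument compatible with the Haar measure class on $G$. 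Alternatively, one sidesteps constructing a contracting homotopy on $\Linfty$ altogether by observing that the inclusion $\Cb(G^{\bullet+1};E) \hookrightarrow \Linfty(G^{\bullet+1};E)$ is a $G$-equivariant morphism between strong resolutions with a shared contracting homotopy from the $\Cb$-side, and then invoking the comparison lemma directly. Either route carries out the program and yields the stated identity.
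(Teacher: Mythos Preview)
Your proposal is correct but considerably more elaborate than what the paper does. The paper's proof consists of two observations: in the discrete case, since Lebesgue $G$-spaces carry the counting measure, one has $\Linfty(G^{n+1};E) = \ell^\infty(G^{n+1};E) = \Cb(G^{n+1};E)$ as Banach $G$-modules, so the two complexes are \emph{literally identical} and there is nothing to prove; in the non-discrete second-countable case the paper simply cites Monod's book \cite[Proposition 7.5.1]{Monod-Book}.

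Your route---recasting both $\Cb(G^{\bullet+1};E)$ and $\Linfty(G^{\bullet+1};E)$ as strong relatively injective resolutions and invoking the fundamental comparison lemma---is exactly how Monod establishes that proposition, so in the second-countable case you are effectively sketching the cited proof rather than citing it. That is a reasonable choice if one wants the paper to be more self-contained. In the discrete case, however, you are doing unnecessary work: there is no need for a contracting homotopy or relative injectivity arguments, since the equality holds at the level of cochain complexes, not merely in cohomology. Recognizing this shortcut is worth noting, both for economy and because it explains why the paper allows uncountable discrete groups (where separability of the pre-dual fails) without any additional effort.
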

\begin{proof} In the discrete case, this is just the definition, since $\Linfty(G^{n+1}; E)^G = \ell^\infty(G^{n+1}; E)^G = C_{\mathrm{b}}(G^{n+1}; E)^G$. In the second-countable case, this is a non-trivial result that appears as {\cite[Proposition 7.5.1]{Monod-Book}} in Monod's book.
\end{proof}
To provide a more general class of resolutions, we need to recall the notion of an amenable Lebesgue $G$-space. In the discrete case, its definition is given in \cite[Section 4.9]{Frigerio} in Frigerio's book. In the non-discrete case, the definition is more technical, see \cite[Sec.\ 5.3]{Monod-Book}.
\begin{prop}[{\cite[Thm.\ 7.5.3]{Monod-Book}, \cite[Thm.\ 4.23]{Frigerio}}]\label{MeasRes2} If $E$ is a coefficient $G$-module and $S$ is an amenable Lebesgue $G$-space, then
\[
\Hb^\bullet(G; E) = \HH^\bullet(0 \to \Linfty(S; E)^G \to  \Linfty(S^2; E)^G \to  \Linfty(S^3; E)^G \to \cdots).\vspace{5pt}
\]
\end{prop}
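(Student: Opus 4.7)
The plan is to deploy the framework of relatively injective strong resolutions of coefficient $G$-modules, which is the machinery underlying continuous bounded cohomology both in the second-countable setting (Monod) and in the discrete setting (Frigerio). The governing principle is that, for any coefficient $G$-module $E$, the continuous bounded cohomology $\Hb^\bullet(G;E)$ is computed by the cohomology of the $G$-invariants of \emph{any} strong resolution of $E$ by relatively injective coefficient $G$-modules, and any two such are $G$-homotopy equivalent. Proposition \ref{MeasRes} already identifies one such resolution, namely $\Linfty(G^{\bullet+1};E)$. My task is therefore reduced to verifying that $\Linfty(S^{\bullet+1};E)$ is likewise a strong resolution by relatively injective coefficient $G$-modules whenever $S$ is amenable; the identification then follows from the uniqueness theorem.

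First I would check that the augmented complex
\[
0 \to E \to \Linfty(S;E) \to \Linfty(S^2;E) \to \Linfty(S^3;E) \to \cdots,
\]
with coboundary given by the alternating sum of pullbacks along the face maps (omission of a coordinate), is exact and admits a norm-one contracting homotopy by bounded \emph{not necessarily} $G$-equivariant operators; this is the content of strongness. A natural way to produce such a homotopy is the standard extra-degeneracy trick: fix a probability measure $\nu$ on $S$ (in the discrete case a Dirac mass at a base point suffices), and define $h_n: \Linfty(S^{n+1};E) \to \Linfty(S^n;E)$ by integrating out the first coordinate against $\nu$. A routine verification yields $\delta h + h \delta = \id$. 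Throughout, one uses the functoriality of $\Linfty(-;E)$ from Construction \ref{LInftyGeneral} and Remark \ref{LinftyFunctor} to make sense of the face-map pullbacks at the level of coefficient modules.

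The substantive step is relative injectivity of each $\Linfty(S^{n+1};E)$, and this is where amenability of $S$ enters decisively. For $n=0$, relative injectivity of $\Linfty(S;E)$ is essentially a reformulation of Zimmer's amenability of the $G$-action on $S$: amenability produces, for any admissible extension problem of coefficient $G$-modules, an equivariant norm-one extension map valued in $\Linfty(S;E)$. For $n \geq 1$ one iterates, exploiting the canonical isomorphism of coefficient $G$-modules $\Linfty(S^{n+1};E) \cong \Linfty(S;\Linfty(S^n;E))$ together with the fact that $\Linfty(S^n;E)$ remains a coefficient $G$-module; amenability of $S$ then lifts to relative injectivity of $\Linfty(S;F)$ for every such $F$. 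The main obstacle will be the careful management of measurability in the non-discrete case: one has to argue consistently with the weak-$\ast$ Borel structure on $E$, respect the separability hypotheses in Definition \ref{DefCoefficientModule}, and choose measurable representatives when constructing the extension map produced by amenability. In the discrete case the whole argument trivializes considerably, since measurability is automatic and amenability of $S$ reduces to the existence of a $G$-invariant mean on $\Linfty(S)$, which directly supplies the required extension by averaging. Once strongness and relative injectivity are both established, the equivalence of resolutions yields the claimed identification of $\Hb^\bullet(G;E)$.
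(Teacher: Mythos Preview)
The paper does not give its own proof of this proposition: it is stated with bracketed citations to \cite[Thm.\ 7.5.3]{Monod-Book} and \cite[Thm.\ 4.23]{Frigerio} and is used as a black box. Your proposal correctly sketches the standard argument found in those references---strong resolution via an extra-degeneracy contracting homotopy, relative injectivity of $\Linfty(S;F)$ from Zimmer-amenability of $S$, and the uniqueness theorem for strong relatively injective resolutions---so there is nothing to compare; you have simply unpacked the cited proof rather than diverged from it.
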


\begin{exmpl}\label{Supergroup} 
The group $G$ acts amenably on $S := G$ via left-multiplication, see \cite[Ex.\ 2.1.2.(i)]{Monod-Book}. If $H<G$ is a closed subgroup and $S$ is an amenable $G$-space, then it is also an amenable $H$-space, see \cite[Ex.~2.1.2.(iv)]{Monod-Book}. In particular, $G$ is an amenable $H$-space, and we deduce from Proposition \ref{MeasRes2} that for any coefficient $H$-module $E$ we have
\begin{equation}\label{ResolutionSupergroup}
	\Hb^\bullet(H;E) = \HH^\bullet(0 \to \Linfty(G;E)^H \to  \Linfty(G^2;E)^H \to  \Linfty(G^3;E)^H \to \cdots).
\end{equation}
\end{exmpl}

The following lemma is an essential technical tool for us. Let ${\bf Vect}$ denote the category of real vector spaces. We say that a sequence 
$
	0 \to E^0 \to E^1 \to E^2 \to \cdots
$ 
of Banach $G$-modules is a \emph{cochain complex}, resp. \emph{exact}, if the underlying sequence of vector spaces has the corresponding property. 

\begin{lem} \label{thm:exactftr}
For any $n \in\bbN$, the functor ${\bf Coef}_G \to {\bf Vect}$, $E \mapsto \Linfty(G^n, E)^G$, is exact .
\end{lem}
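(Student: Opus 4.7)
The plan is to reduce the claim to the exactness of the simpler functor $E \mapsto \Linfty(G^{n-1}; E)$ (without taking $G$-invariants), via a natural isomorphism $\Linfty(G^n; E)^G \cong \Linfty(G^{n-1}; E)$, and then to establish this latter exactness using the open mapping theorem together with a measurable selection argument.

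For the reduction, I would use the Borel $G$-isomorphism $G \times G^{n-1} \to G^n$, $(g, h_1, \ldots, h_{n-1}) \mapsto (g, gh_1, \ldots, gh_{n-1})$, under which the diagonal $G$-action on $G^n$ corresponds to left translation on the first factor and the trivial action on the second. A Fubini-type identification combined with the elementary fact $\Linfty(G; E)^G \cong E$ (where $v \in E$ corresponds to the class of the Borel map $g \mapsto g \cdot v$, the inverse being extracted by a Fubini argument on the cocycle relation) then yields the desired natural isomorphism, which can be described explicitly by $\tilde f(h_1, \ldots, h_{n-1}) = f(e, h_1, \ldots, h_{n-1})$ with inverse $f(g_1, \ldots, g_n) = g_1 \cdot \tilde f(g_1^{-1} g_2, \ldots, g_1^{-1} g_n)$; in the discrete case the identification is purely combinatorial.

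It will then remain to prove exactness of $E \mapsto \Linfty(X; E)$ for $X = G^{n-1}$. Left-exactness and middle-exactness are routine: an injective morphism $\alpha: E^0 \to E^1$ induces an injective map at the level of $\Linfty$, and if $\phi \in \Linfty(X; E^1)$ satisfies $\beta \circ \phi = 0$ a.e., then $\phi$ takes values a.e.\ in the weak-$*$ closed subspace $\ker(\beta) = \im(\alpha)$, so composing with the bounded inverse $\alpha^{-1}: \im(\alpha) \to E^0$ (supplied by the open mapping theorem) produces the required preimage. The essential content of the lemma is right-exactness: given a surjection $\beta: E^1 \twoheadrightarrow E^2$ and $\phi \in \Linfty(X; E^2)$, the open mapping theorem yields $C > 0$ such that every $v \in E^2$ admits a $\beta$-preimage of norm at most $C \|v\|$. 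In the discrete case the axiom of choice produces a bounded set-theoretic section $\sigma: E^2 \to E^1$, and composition with $\phi$ solves the lifting problem since measurability is vacuous. The hard part will be the second-countable case, where one must construct a \emph{Borel} section with respect to the weak-$*$ Borel structures on $E^1$ and $E^2$. I plan to invoke the Kuratowski--Ryll-Nardzewski selection theorem for this, using that bounded weak-$*$-closed subsets of $E^1$ are Polish (by Banach--Alaoglu and separability of the predual), that $\beta$ is weak-$*$ continuous as a morphism of coefficient modules, and that $v \mapsto \beta^{-1}(v) \cap \{u \in E^1 : \|u\| \leq C \|v\|\}$ has non-empty weak-$*$ closed values.
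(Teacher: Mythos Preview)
The paper does not actually prove this lemma; it simply cites Monod's book \cite[Lemmas 8.2.4 and 8.2.5]{Monod-Book} for the discrete and second-countable cases respectively. Your proposal, by contrast, sketches a genuine self-contained argument, and the strategy you outline---reducing $\Linfty(G^n;E)^G$ to $\Linfty(G^{n-1};E)$ via the shearing isomorphism, then handling right-exactness by combining the open mapping theorem with a Borel selection (axiom of choice in the discrete case, Kuratowski--Ryll-Nardzewski in the second-countable case)---is correct and is essentially what one finds upon unwinding Monod's proofs. One small point worth making explicit in your write-up: in the middle-exactness step, norm-boundedness of $\alpha^{-1}$ alone does not immediately give weak-$*$ Borel measurability of $\alpha^{-1}\circ\phi$; you should observe that $\alpha$ restricted to a closed ball is a weak-$*$ continuous bijection from a compact metrizable space onto its image, hence a homeomorphism, and then invoke the open mapping theorem to ensure the (essentially bounded) image of $\phi$ lands in such an $\alpha$-image. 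Also note that your reduction tacitly assumes $n\geq 1$; the case $n=0$ (i.e.\ exactness of $E\mapsto E^G$) is not covered by your argument, but only $n\geq 1$ is ever used in the paper.
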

\begin{proof}
For discrete groups, this is contained in \cite[Lemma 8.2.4]{Monod-Book}, and, for non-discrete lcsc groups, in \cite[Lemma 8.2.5]{Monod-Book}.
\end{proof}
More concretely, this lemma states that if $0\to A \to B \to C \to 0$ is a short exact sequence of coefficient $G$-modules, then 
\[
	0 \to \Linfty(G^n; A)^G \to \Linfty(G^n; B)^G \to \Linfty(G^n; C)^G \to 0
\]
is an exact sequence of vector spaces.

\subsection{Lebesgue $G$-complexes} \label{sec:GObjects}  

From Lebesgue $G$-spaces, we can build associated complexes using the following notion:
\begin{defn}[{see Weibel \cite[Definition 8.1.9]{Weibel}}] \label{def_ssobjects}
A \emph{semi-simplicial object} $X$ over a concrete category $\clC$ is a sequence of objects $(X_k)_{k \in \bbN}$, together with morphisms $\delta_{i,k}:\, X_{k+1} \to X_k$  for all $k$ and $i \in [k]$, called \emph{face maps}, such that 
\begin{equation} \label{eq:facecondition}
	\delta_{i,k-1} \, \circ \, \delta_{j,k} = \delta_{j-1,k-1} \, \circ \, \delta_{i,k} \qquad \mbox{whenever } i<j\text{.}
\end{equation}
If $X$ and $Y$ are semi-simplicial objects over a category $\clC$, then a \emph{semi-simplicial morphism} $f: X \to Y$ is a collection $(f_k: X_k \to Y_k)_{k \in \bbN}$ of morphisms such that $f_{k} \circ \delta_{i, k-1} = \delta_{i,k} \circ f_{k+1}$ for all $k \in \bbN$ and all $i\in [k+1]$.  
\end{defn}

We will usually omit the index $k$ in $\delta_{i,k}$, writing simply $\delta_{i}$ for all $i$-th face maps. 

\begin{defn}\label{DefGObject} A semi-simplicial object in ${\bf Leb}_G$ will be called a \emph{Lebesgue $G$-complex}. 
\end{defn}

\begin{rem} \label{stabilizer_cont}  If $X$ is a Lebesgue $G$-space, then we refer to points in $\bigsqcup_{k \in \bbN} X_k$ as \emph{simplices} and to points of $X_k$ as \emph{$k$-simplices} of $X$. Given two simplices $x, y \in X$, we say that $x$ is a \emph{face} of $y$, denoted $x \prec y$, if $x$ can be obtained from applying finitely many face maps to $y$. A (finite or infinite) sequence $x_0 \prec x_1 \prec x_2 \prec \cdots$ of simplices is called a \emph{flag}.

If $x,y \in X$ are such that $x \prec y$, then the reversed inclusion $ \stab_G (y) < \stab_G (x)$ of stabilizer subgroups holds, hence every flag $x_0 \prec x_1\prec x_2 \prec \cdots$ gives rise to a chain of closed subgroups
\[
G > \stab_G(x_0) > \stab_G(x_1) > \stab_G(x_2) > \cdots 
\]
The closedness of these inclusions in the case that $G$ is non-discrete follows from \cite[Cor.\ 2.1.20]{Zimmer} in Zimmer's book, since standard Borel spaces are countably separated.
\end{rem}
\begin{con}
Let $X$ be a Lebesgue $G$-complex.  We define $X_{-1}$ to be a singleton, so that $\Linfty(X_{-1}) \cong \bbR$. There is a unique ``face map''
$\delta_{0}: X_0 \to X_{-1}$, which induces the inclusion of constants $\delta^0:\bbR \to \Linfty(X_0)$. 

Applying the $\Linfty$-functor from Remark \ref{LinftyFunctor} to the face maps $\delta_i: X_{l+1} \to X_l$ provides a family of morphisms $\delta^i: \Linfty(X_l) \to \Linfty(X_{l+1})$ of coefficient $G$-modules. If we define
\[
\dd^l: \Linfty(X_l) \to \Linfty(X_{l+1}), \quad \dd^l := {\textstyle \sum_{i=0}^{l+1}} (-1)^i \delta^i \quad (l \geq -1),
\]
then $\dd^{l} \circ \dd^{l-1} = 0$ for all $l \geq 0$, and hence we obtain a cochain complex
\begin{eqnarray}\label{LInftyComplexStiefel}
0 \rightarrow \bbR \xrightarrow{\dd^{-1}} \Linfty(X_0) \xrightarrow{\dd^{0}} \Linfty(X_1) \xrightarrow{\dd^{1}} \Linfty(X_2) \xrightarrow{\dd^{2}} \Linfty(X_3) \, \xrightarrow{} \cdots
\end{eqnarray}
of coefficient $G$-modules.
\end{con}
\begin{defn} \label{def:augmented}
Given a Lebesgue $G$-complex $X$, the cochain complex of coefficient $G$-modules \eqref{LInftyComplexStiefel} is called the \emph{augmented $\Linfty$-complex} associated to $X$.

We will say that $X$ is \emph{boundedly acyclic} if its augmented $\Linfty$-complex is exact, and \emph{boundedly $\gamma_0$-acyclic} for some $\gamma_0 \in \bbN \cup \{\infty\}$ if $\ker \dd^l = \im \dd^{l-1}$ for every $l \in [\gamma_0]$. 

If a Lebesgue $G$-complex $X$ is boundedly $\gamma_0$-acyclic for some $\gamma_0 \in \mathbb N$ that we do not want to specify, then we will say that $X$ is \emph{partially boundedly acyclic}. We will also convene that every Lebesgue $G$-complex is boundedly $(-\infty)$-acyclic. 
\end{defn} %The corresponding \emph{unaugmented $\Linfty$-complex} is then defined as
%\begin{eqnarray}\label{LInftyComplexStiefelUnaug}
%0 \to \Linfty(X_0) \xrightarrow{\dd^{0}} \Linfty(X_1) \xrightarrow{\dd^{1}} \Linfty(X_2) \xrightarrow{\dd^{2}} \Linfty(X_3) \xrightarrow{} \cdots
%\end{eqnarray}

\subsection{The spectral sequence of a partially boundedly acyclic Lebesgue $G$-complex} \label{sec:specseq_admissible}
With every partially boundedly acyclic Lebesgue $G$-complex, we can now associate a spectral sequence; our notation concerning spectral sequences follows McCleary's book \cite{McCleary}. 

\begin{prop} \label{thm:E_semisimplicial}
	Let $X = (X_q)_{q \in \bbN}$ be a Lebesgue $G$-complex which is boundedly $\gamma_0$-acyclic for some $\gamma_0 \in \mathbb N \cup \{\infty\}$. Then there exists a first-quadrant spectral sequence $\EE_\bullet^{\bullet,\bullet}$ with first page terms and differentials
\begin{equation} \label{eq:firstpage}
	\EE_1^{p,q} = \Hb^q(G;\Linfty(X_{p-1})) \qand \dd_1^{p,q} = \Hb^q(G; \dd^{p-1}) \quad \mbox{for all } p, q \geq 0
\end{equation}
that converges to $\EE_\infty^t = 0$ for all $t \in [\gamma_0+1]$, where $\dd^q$ are the coboundary operators in \eqref{LInftyComplexStiefel}. 
\end{prop}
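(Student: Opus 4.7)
The plan is to introduce a first-quadrant double complex
\[
K^{p,q} := \Linfty(G^{q+1};\Linfty(X_{p-1}))^G \qquad (p,q \geq 0),
\]
whose horizontal differential $d_h \colon K^{p,q} \to K^{p+1,q}$ is induced by the codifferential $\dd^{p-1}$ from \eqref{LInftyComplexStiefel} via the functor $\Linfty(G^{q+1};-)^G$ (see Construction \ref{LInftyGeneral}), and whose vertical differential $d_v \colon K^{p,q} \to K^{p,q+1}$ is the standard homogeneous bar coboundary, multiplied by $(-1)^p$ on the $p$-th column so that $d_h d_v + d_v d_h = 0$. The claimed spectral sequence $\EE$ will then be the columnwise spectral sequence $\IE$ of $K$, while the rowwise spectral sequence $\IIE$ of $K$ will be used to control its abutment.

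To compute $\IE$, I would first take $d_v$-cohomology of each column. Proposition \ref{MeasRes}, applied to the coefficient $G$-module $\Linfty(X_{p-1})$, identifies $K^{p,\bullet}$ with the canonical complex computing $\Hb^\bullet(G;\,\Linfty(X_{p-1}))$. Functoriality of $F_q := \Linfty(G^{q+1};-)^G$ applied to $\dd^{p-1}$ then yields
\[
\IE_1^{p,q} \,=\, \Hb^q(G;\,\Linfty(X_{p-1})), \qquad \Id_1^{p,q} \,=\, \Hb^q(G;\, \dd^{p-1}),
\]
which is precisely \eqref{eq:firstpage}. As a first-quadrant bicomplex, $\IE$ converges to the total cohomology $\HH^\bullet(\mathrm{Tot}\,K)$.

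For $\IIE$, I would observe that each row $K^{\bullet,q}$ is obtained by applying $F_q$ to the augmented $\Linfty$-complex $0 \to \bbR \to \Linfty(X_0) \to \Linfty(X_1) \to \cdots$ of $X$. Lemma \ref{thm:exactftr} ensures that $F_q$ is exact, while the $\gamma_0$-acyclicity hypothesis supplies exactness of the augmented complex at every $\Linfty(X_l)$ with $l \in [\gamma_0]$, and the injectivity of the inclusion of constants $\bbR \hookrightarrow \Linfty(X_0)$ is automatic. Exactness of $F_q$ therefore forces the row $K^{\bullet,q}$ to be exact at every position $p \in [\gamma_0+1]$, so that $\IIE_1^{p,q} = 0 = \IIE_\infty^{p,q}$ in that range. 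Both spectral sequences share the abutment $\HH^\bullet(\mathrm{Tot}\,K)$, and since every $(p,q)$ with $p+q = t \leq \gamma_0+1$ satisfies $p \leq \gamma_0+1$, this forces $\HH^t(\mathrm{Tot}\,K) = 0$ for all $t \in [\gamma_0+1]$, whence $\EE_\infty^t = 0$ in the same range.

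The delicate point I would need to be careful with is the interplay between the index shift $K^{p,q} = \Linfty(G^{q+1};\Linfty(X_{p-1}))^G$ and the acyclicity bookkeeping: the $\gamma_0$-acyclicity of \eqref{LInftyComplexStiefel} gives $\gamma_0+1$ exact spots in the original $\Linfty$-complex, but combined with the automatic injectivity at the augmentation this yields $\gamma_0+2$ consecutive vanishings in each row of $K$, which is precisely what promotes the vanishing range of $\EE_\infty^t$ from $[\gamma_0]$ to $[\gamma_0+1]$. Otherwise, every ingredient---the exactness of $F_q$ (Lemma \ref{thm:exactftr}) and the identification of $\Hb^\bullet(G;E)$ with the cohomology of $\Linfty(G^{\bullet+1};E)^G$ (Proposition \ref{MeasRes})---is already in place.
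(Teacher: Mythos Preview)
Your proposal is correct and follows essentially the same approach as the paper: both construct the double complex $\Linfty(G^{\bullet+1};\Linfty(X_{\bullet-1}))^G$, use Proposition \ref{MeasRes} to identify one spectral sequence with \eqref{eq:firstpage}, and use Lemma \ref{thm:exactftr} together with $\gamma_0$-acyclicity to force the other spectral sequence (and hence the common abutment) to vanish in total degree $\leq \gamma_0+1$. The only difference is cosmetic---you swap the roles of $p$ and $q$ (and hence of $\IE$ and $\IIE$) relative to the paper's indexing, and you spell out explicitly the ``delicate point'' about the extra vanishing degree coming from injectivity of the augmentation, which the paper leaves implicit.
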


\begin{proof}
Consider the first-quadrant double complex $(L^{\bullet,\bullet},\dd_{\rm H}, \dd_{\rm V})$, whose terms and differentials for $p,q \geq 0$ are given by
\begin{equation*}
\begin{array}{c}
	L^{p,q}:=\Linfty\big(G^{p+1} \times X_{q-1}\big)^{G} \cong \Linfty\big(G^{p+1}; \Linfty(X_{q-1})\big)^{G}, \vspace{4pt}  \\
\begin{array}{ll}
	\dd^{p,q}_{\rm H}:\,L^{p,q} \to L^{p+1,q}, \qquad &  \dd^{p,q}_{\rm H}\!\! f(g_0,\ldots,g_p):= \sum_{i=0}^p (-1)^i f(g_0,\ldots,\hat{g_i},\ldots,g_p), \vspace{4pt} \\
	\dd_{\rm V}^{p,q}:\,L^{p,q} \rightarrow L^{p,q+1}, \qquad & \dd^{p,q}_{\rm V}\!\! f(g_0,\ldots,g_{p-1}):= \dd^{q-1}\!\big(f(g_0,\ldots,g_{p-1})\big). 
\end{array}
\end{array}
\end{equation*}
\noindent Here $\dd^{q-1}$ denotes the coboundary operator of the augmented $\Linfty$-complex of $X$, as in \eqref{LInftyComplexStiefel}. We let $\IE_\bullet^{\bullet,\bullet}$ and $\IIE_\bullet^{\bullet,\bullet}$ be the two spectral sequences associated with the horizontal and vertical filtrations of $L^{\bullet,\bullet}$, respectively, both of which converge to the cohomology of the total complex of $L^{\bullet,\bullet}$ and whose first-page terms and differentials are given by 
\begin{align*}
\IE_{1}^{p,q} & =\HH^q (L^{p,\bullet},\,\dd_{\rm V}^{p,\bullet}),\quad\Id_{1}^{p,q}=\HH^q(\dd_{\rm H}^{p,\bullet}): \,\IE_{1}^{p,q}\rightarrow\IE_{1}^{p+1,q},\\[-3pt]
\IIE_{1}^{p,q} & =\HH^q(L^{\bullet,p},\,\dd^{\bullet,p}_{\rm H}),\quad\IId_{1}^{p,q}= \HH^q(\dd_{\rm V}^{\bullet,p}):\,\IIE_{1}^{p,q}\rightarrow\IIE_{1}^{p+1,q}. 
\end{align*}
For a proof of the existence and convergence properties of these two spectral sequences, see McCleary \cite[Theorem 2.15]{McCleary}. By the $\gamma_0$-acyclicity of $X$ and exactness of the functor $\Linfty(G^{p+1},-)^G$ for every $p \geq 0$ (see Lemma \ref{thm:exactftr}), the complex
\[
0 \to L^{p,0} \to L^{p,1} \to \cdots \to L^{p,\gamma_0} \to L^{p,\gamma_0+1} \to L^{p,\gamma_0+2} \to \cdots,
\]
is exact up to degree $\gamma_0+1$. Thus, $\IE_1^{p,q} = 0$ for all $p \geq 0$ and $q \in [\gamma_0+1]$, and therefore $\IE_\infty^{t} \cong \bigoplus_{p+q = t} \IE_\infty^{p,q} = 0$  for every $t \in [\gamma_0+1]$. We deduce that also the spectral sequence $\EE_\bullet^{\bullet,\bullet} := \IIE_\bullet^{\bullet,\bullet}$ converges to $0$ in degrees $t \in [\gamma_0 + 1]$. Finally, it follows from Proposition \ref{MeasRes} that the first page of $\EE_\bullet^{\bullet, \bullet}$ is given by \eqref{eq:firstpage}.
\end{proof}

\section{The highly essentially transitive case}
In this section, $G$ continues to denote a locally compact group, either second-countable or discrete, and $X$ denotes a partially boundedly acyclic Lebesgue $G$-complex. In the discrete case, $G$-sets are always endowed with the counting measure. We are going to discuss how certain transitivity properties of the $G$-action on $X$ affect the spectral sequence constructed in the previous section. This will lead us to a proof of Theorem \ref{BasicSS} from the introduction.

\begin{defn} \label{def:conn+trans}
Let $X$ be an Lebesgue $G$-complex and $\tau_0 \in \bbN \cup \{\infty\}$. We say that $X$ is \emph{essentially $\tau_0$-transitive} if $X_l$ admits a conull $G$-orbit for all $l \in [\tau_0]$. We convene that any Lebesgue $G$-complex is essentially $(-\infty)$-transitive. 
%\begin{enumerate}[label=(\roman*),leftmargin=20pt]
%	\item \emph{$\tau_0$-transitive} if the $G$-action on $X_l$ is transitive for all $l \in [\tau_0]$; 
%	\item \emph{essentially $\tau_0$-transitive} if there exists a subcomplex $Y \subset X$ such that $Y_l \subset X_l$ is co-null and  the $G$-action on $Y_l$ is transitive for all $l \in [\tau_0]$.
%\end{enumerate}
\end{defn}  
For the remainder of this section we assume that $X$ is a Lebesgue $G$-complex, which is boundedly $\gamma_0$-acyclic and essentially $\tau_0$-transitive for some $\gamma_0, \tau_0 \in \mathbb N \cup \{\infty\}$. We denote by $\EE_{\bullet}^{\bullet, \bullet}$ the associated spectral sequence constructed in Proposition \ref{thm:E_semisimplicial}.

\begin{rem} The bottom row of the first page of the spectral sequence $\EE_{\bullet}^{\bullet, \bullet}$ is given by
\begin{align*}
\EE_1^{p,0} &= \Hb^0(G;\Linfty(X_{p-1})) = \Linfty(X_{p-1})^G  \qand \\[-3pt]
\dd_1^{p,0} &= \Hb^0(G; \dd^{p-1}) \quad \mbox{for all } p, q \geq 0.
\end{align*}
Since the constant functions are always contained in $\Linfty(X_{p-1})^G$ we have an embedding of constants $c_p: \mathbb R \hookrightarrow \EE_1^{p,0}$ for every $p \geq 0$, such that all of the diagrams
\begin{equation*}
\xymatrixcolsep{2pc}
\xymatrix@R=5pt{\EE_1^{p,0}\ar[rr]^{\delta^i} && \EE_1^{p+1,0}  \\
& \ar@{_{(}->}[ul]^{c_p} \bbR \ar@{^{(}->}[ur]_{c_{p+1}}}
\end{equation*}
commute. Since $G$ acts essentially $\tau_0$-transitive, we have
\[
\EE_1^{p,0} = \Linfty(X_{p-1})^G = c_p(\mathbb R) \cong \bbR \quad \text{for all } p \in [\tau_0+1].
\]
It thus follows from the commuting diagram above that each of the dual face maps $\delta^i: \EE_1^{p,0} \to  \EE_1^{p+1,0}$ equals the identity if 
$p \in [\tau_0]$ and is an injection if $p = \tau_0+1$. We deduce that the face map $\dd_1^{p,0}$ is the zero map if $p \in [\tau_0+1]$ is odd, an isomorphism if $p \in [\tau_0]$ is even, and injective if $p = \tau_0+1$ is even. Consequently we have
\begin{equation}\label{T3}
\EE_2^{p,0} = 0 \; \text{for all }p \in [\tau_0+1].
\end{equation}
\end{rem}
\begin{con}\label{ConGenericFlag} 
By assumption, we can find $o_{\tau_0} \in X_{\tau_0}$ such that the orbit $G.o_{\tau_0} \subset X_{\tau_0}$ is conull. Since the face maps are $G$-equivariant and measure-class-preserving, any codimension-1 face $o_{\tau_0-1} \in X_{\tau_0-1}$ has a conull orbit in $X_{\tau_0-1}$. Thus, we construct a flag $o_{-1} \prec o_0 \prec \cdots \prec o_{\tau_0}$ inductively with $o_q \in X_q$ such that $o_q$ has a conull orbit in $X_q$ for all $q \in \{-1\} \cup [\tau_0]$ (recall our convention that $X_{-1}$ is a singleton, and thus $o_{-1}$ denotes its only element). We refer to such a flag as a \emph{generic flag} in $X$. Given a generic flag, we define 
\[
H_q := \stab_G(o_q) \quad (q \in \{-1\} \cup [\tau_0]);
\]
note that according to our convention, $G$ acts trivially on $X_{-1}$, so $H_{-1} = G$.

By Remark \ref{stabilizer_cont}, we have a chain of closed subgroups
\[
G = H_{-1}>H_0 > H_1 > \dots > H_{\tau_0},
\]
%and we denote by $j_q: H_q \to H_{q-1}$ the corresponding inclusion maps. 
Now let $q \in \{-1\} \cup [\tau_0-1]$ and $i \in [q+1]$, so that $\delta_i(o_{q+1}) \in X_{q}$. By transitivity, there exists $w_{q,i} \in G$ such that
\begin{equation*}\label{wqi}
\delta_i(o_{q+1}) = w_{q,i}^{-1}.o_{q},
\end{equation*}
and we fix a choice of such elements of $G$ once and for all. 
\end{con}

\begin{lem}\label{wLemma}
For all $q \in \{-1\} \cup [\tau_0-1]$ and $i \in [q+1]$, we have the inclusion $w_{q,i} H_{q+1} w_{q,i}^{-1} < H_q$. In particular, there is a well-defined map
\[
\Int(w_{q, i}): H_{q+1} \to H_q, \quad h \mapsto  w_{q,i}hw_{q,i}^{-1}.
\]
\end{lem}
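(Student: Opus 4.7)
The plan is a direct orbit-chasing computation. The only structural fact we need is that the face maps $\delta_i: X_{q+1} \to X_q$ are morphisms in ${\bf Leb}_G$, hence $G$-equivariant; this is part of the definition of a Lebesgue $G$-complex (\autoref{DefGObject}). Everything else is bookkeeping with the defining relation $\delta_i(o_{q+1}) = w_{q,i}^{-1}.o_q$ chosen in Construction \ref{ConGenericFlag}.

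Concretely, to prove the inclusion $w_{q,i} H_{q+1} w_{q,i}^{-1} \subset H_q$, I would fix $h \in H_{q+1}$ and verify directly that $(w_{q,i} h w_{q,i}^{-1}).o_q = o_q$. Rewriting $w_{q,i}^{-1}.o_q = \delta_i(o_{q+1})$, applying $h$, using $G$-equivariance of $\delta_i$ together with $h.o_{q+1} = o_{q+1}$, and finally applying $w_{q,i}$, one obtains the chain
\[
(w_{q,i} h w_{q,i}^{-1}).o_q \;=\; w_{q,i}.\delta_i(h.o_{q+1}) \;=\; w_{q,i}.\delta_i(o_{q+1}) \;=\; w_{q,i}.(w_{q,i}^{-1}.o_q) \;=\; o_q,
\]
so $w_{q,i} h w_{q,i}^{-1} \in H_q$, as desired.

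Once this inclusion is established, the ``In particular'' clause is immediate: $\Int(w_{q,i}): G \to G$ is a group homomorphism for every $w_{q,i} \in G$, and the inclusion above says exactly that it restricts to a map $H_{q+1} \to H_q$, which is then automatically a group homomorphism. I expect no real obstacle here; the only subtle point worth flagging explicitly in the writeup is that $G$-equivariance of $\delta_i$ is used on an individual element $o_{q+1}$ (as opposed to almost everywhere), which is fine because face maps are honest $G$-equivariant Borel maps between the underlying Borel spaces, not merely equivalence classes.
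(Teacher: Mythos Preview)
Your proof is correct and follows essentially the same approach as the paper: both arguments use $G$-equivariance of $\delta_i$ together with the defining relation $\delta_i(o_{q+1}) = w_{q,i}^{-1}.o_q$ to verify that $w_{q,i} h w_{q,i}^{-1}$ fixes $o_q$. The only difference is cosmetic ordering of the chain of equalities.
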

\begin{proof} If $h \in H_{q+1}$, then
\[
w_{q,i}^{-1}.o_q = \delta_i(o_{q+1}) = \delta_i(h.o_{q+1}) = h.\delta_i(o_{q+1}) = h.w_{q,i}^{-1}.o_q \implies w_{q,i}hw_{q,i}^{-1} \in H_q.\qedhere
\]
\end{proof}
%For later use we observe that with the convention $G_{-1} := H$ the lemma holds for $q = -1$ and $w_{q, i} \in G$ arbitrary.
\begin{con}
For $p \in [\tau_0+1]$, we define the $(p-1)$-th \emph{induction module} $\Ind^{p-1} := \Linfty(G)^{H_{p-1}}$, where the $H_{p-1}$-invariance is taken with respect to the restriction of the $G$-action by left-translation. Equipped with the right-translation $G$-action $\rho$, the space $\Ind^{p-1}$ is a Banach $G$-module. We now define a family of induction maps
 \begin{equation*} \label{induction_bla}
	\Ind\!\!: \Linfty(G^{q+1})^{H_{p-1}} \to \Linfty(G^{q+1},\Ind^{p-1})^{G}, \quad (\Ind\varphi)(\vec{g})(x)\!:= \varphi(x\vec{g})  \qquad (q \geq 0),
\end{equation*}
which produces a  morphism of cochain complexes (see \cite[Proposition 10.1.3]{Monod-Book}). We also define
\begin{equation*} 
	\Psi_0: \Ind^{p-1} \to \Linfty(X_{p-1}), \quad (\Psi_0\varphi)(g\cdot o_{p-1}) := \varphi(g^{-1}).
\end{equation*}
and define a morphism of cochain complexes $\Psi := \Linfty(G^{q+1};\Psi_0)^{G}$, so that
\begin{equation*} \label{PSI}
	\Psi: \Linfty(G^{q+1};\Ind^{p-1})^{G} \ \to \Linfty(G^{q+1};\Linfty(X_{p-1}))^{G} \quad (q \geq 0).
\end{equation*}
Note that, by Proposition \ref{MeasRes2}, the cohomology of $\Linfty(G^{q+1})^{H_{p-1}}$  is $\Hb^q(H_{p-1})$, and by Proposition \ref{MeasRes}, the cohomology of $ \Linfty(G^{q+1};\Linfty(X_{p-1}))^{G}$ is $\Hb^q(G;\Linfty(X_{p-1})) = \EE_1^{p,q}$.
\end{con}
\begin{prop}\label{propisomstabil} The composition $\Psi \circ \Ind: \Linfty(G^{q+1})^{H_{p-1}} \to \Linfty(G^{q+1};\Linfty(X_{p-1}))^{G}$ induces an isomorphism
\begin{equation*} \label{isom_stabil}
	I_{p,q}:\;\Hb^q(H_{p-1}) \to \EE_1^{p,q} \quad \mbox{for every } p \in [\tau_0+1] \text{ and every } \ q\geq 0. \vspace{-5pt}
\end{equation*}\end{prop}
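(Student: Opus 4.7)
The plan is to factor $\Psi \circ \Ind$ as the composition of two cochain-level isomorphisms and then identify the cohomologies of the source and target with $\Hb^q(H_{p-1})$ and $\EE_1^{p,q}$, respectively.

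First, I would verify that $\Psi_0: \Ind^{p-1} \to \Linfty(X_{p-1})$ is an isomorphism of coefficient $G$-modules. Since $p \in [\tau_0+1]$ means $p-1 \in [\tau_0]$, essential $\tau_0$-transitivity ensures that the orbit $G \cdot o_{p-1}$ is conull in $X_{p-1}$. The orbit map $g \mapsto g \cdot o_{p-1}$ then descends to a $G$-equivariant Borel bijection $G/H_{p-1} \to G \cdot o_{p-1}$, which equals all of $X_{p-1}$ in the discrete case (by counting measure) and is automatically a Borel isomorphism between standard Borel spaces in the second-countable case (cf.\ \cite[Cor.\ 2.1.20]{Zimmer}). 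Pulling back functions, with the inversion $g \mapsto g^{-1}$ used to swap the handedness of cosets, produces the inverse of $\Psi_0$. Since the functor $\Linfty(G^{q+1}; -)^G$ preserves isomorphisms of coefficient $G$-modules (indeed it sends them to isomorphisms of vector spaces), $\Psi$ is then an isomorphism of cochain complexes.

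Next, I would exhibit an explicit inverse to $\Ind$, namely the evaluation-at-identity map $\psi \mapsto [\vec{g} \mapsto \psi(\vec{g})(e)]$. The fact that this evaluation lies in $\Linfty(G^{q+1})^{H_{p-1}}$ is obtained by combining the diagonal $G$-invariance of $\psi$ with the defining left-$H_{p-1}$-invariance of the elements of $\Ind^{p-1} = \Linfty(G)^{H_{p-1}}$: for $h \in H_{p-1}$, the $G$-invariance yields $\psi(h\vec{g})(h^{-1}) = \psi(\vec{g})(e)$, and the coset-invariance of $\psi(h\vec{g})$ converts $\psi(h\vec{g})(h^{-1})$ to $\psi(h\vec{g})(e)$. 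A short computation shows mutual invertibility, and $\Ind$ visibly commutes with the bar differentials on $G^{q+1}$. Thus $\Ind$ is an isomorphism of cochain complexes as well.

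To conclude, by Proposition \ref{MeasRes} applied to $G$ with coefficients in $\Linfty(X_{p-1})$, the cohomology of the complex $\Linfty(G^{\bullet+1}; \Linfty(X_{p-1}))^G$ equals $\Hb^\bullet(G; \Linfty(X_{p-1})) = \EE_1^{p,\bullet}$, and by Example \ref{Supergroup} together with Proposition \ref{MeasRes2} the cohomology of $\Linfty(G^{\bullet+1})^{H_{p-1}}$ equals $\Hb^\bullet(H_{p-1})$. Composing the two cochain-level isomorphisms established above with these identifications yields the desired isomorphism $I_{p,q}: \Hb^q(H_{p-1}) \to \EE_1^{p,q}$. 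The only genuine subtlety lies in the measure-theoretic identification $G/H_{p-1} \cong X_{p-1}$ (up to null sets) in the second-countable case, and this is handled by the standard Borel theory; everything else is a bookkeeping exercise about left- versus right-translation actions.
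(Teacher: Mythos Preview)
Your overall strategy matches the paper's: factor $\Psi \circ \Ind$ into its two pieces and argue that each induces an isomorphism. Your treatment of $\Psi$ via essential transitivity (showing $\Psi_0$ is an isomorphism of coefficient $G$-modules, whence $\Psi$ is an isomorphism of complexes) is correct and is precisely what the paper does.

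The gap is in your treatment of $\Ind$. You propose an explicit cochain-level inverse given by evaluation at the identity, $\psi \mapsto [\vec{g} \mapsto \psi(\vec{g})(e)]$. This is fine when $G$ is discrete, but in the second-countable non-discrete case it is not well-defined: $\psi(\vec{g})$ is an element of $\Ind^{p-1} = \Linfty(G)^{H_{p-1}}$, an $L^\infty$ function class determined only modulo null sets, and one cannot evaluate such a class at the single point $e \in G$. Passing through the Fubini identification $\Linfty(G^{q+1}; \Linfty(G)) \cong \Linfty(G^{q+2})$ does not resolve this, since restricting an $L^\infty$ class on $G^{q+2}$ to the null slice $G^{q+1} \times \{e\}$ is equally meaningless. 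One can repair the argument by exploiting the $G$-equivariance to show that the corresponding function on $G^{q+2}$ descends along the submersion $(\vec{g},x) \mapsto x\vec{g}$, but this requires genuine measure-theoretic work, not the ``bookkeeping exercise'' you describe. The paper avoids the issue entirely: it only asserts that $\Ind$ induces an isomorphism \emph{in cohomology}, and for this it cites the Eckmann--Shapiro lemma for $\Linfty$-induction modules (\cite[Proposition~10.1.3]{Monod-Book} and \cite[Lemma~3.5]{Monod-Nariman}), proved via the machinery of relatively injective resolutions rather than a pointwise inverse. Contrary to your final remark, the measure-theoretic subtlety lies here, not in the orbit identification $G/H_{p-1} \cong X_{p-1}$.
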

\begin{proof} It suffices to show that $\Ind$ and $\Psi$ induce isomorphisms in cohomology. The former is just the Eckmann--Shapiro lemma for $\Linfty$-modules (see Monod \cite[Proposition 10.1.3]{Monod-Book} and Monod--Nariman \cite[Lemma 3.5]{Monod-Nariman}) and the latter is immediate from the fact $\Psi_0$ is an isomorphism for any fixed $q\geq 0$ by essential transitivity.
\end{proof}
%Note that (with the convention $H_{-1} := G$) we have $\EE_1^{0,q}= \Hb^q(G)  = \Hb^q(H_{-1})$ for all $q \geq 0$. We thus define $I_{0,q}$ to be the identity, then $I_{p,q}:\;\Hb^q(H_{p-1}) \to \EE_1^{p,q}$ is an isomorphism for all $p \in [\tau_0 + 1]$ and $q \geq 0$, including for $p = 0$. 
Having computed the terms on the first page of $\EE_\bullet^{\bullet,\bullet}$, we now compute the differentials. The maps $\Int(w_{p-1, i}): H_p \to H_{p-1}$ from \autoref{wLemma} induce a family of maps $\Hb^q(\Int(w_{p-1, i})): \Hb^q(H_{p-1}) \to \Hb^q(H_p)$, and we claim:
\begin{lem}\label{thm:eckmann_facemap}
For every $p \in [\tau_0]$, $q \geq 0$, and $i \in [p]$, we have a commuting diagram \vspace{-5pt}
\begin{equation*} 
		\xymatrixcolsep{4.5pc}
		\xymatrixrowsep{1pc}
		\xymatrix{\EE_1^{p,q} \ar[r]^{\Hb^q(G;\delta^i)} & \EE_1^{p+1,q} \\
		\Hb^q(H_{p-1}) \ar[u]^{I_{p,q}} \ar[r]^{\Hb^q(\Int(w_{p-1, i}))} & \Hb^q(H_{p}) \ar[u]_{I_{p+1,q}}} 
\end{equation*}
\end{lem}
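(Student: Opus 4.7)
The plan is to verify the commutativity of the diagram directly at the cochain level. Write $w := w_{p-1,i}$. Recall from Example \ref{Supergroup} that $\Hb^q(H_{p-1})$ and $\Hb^q(H_p)$ are computed by the Shapiro-type resolutions $\Linfty(G^{\bullet+1})^{H_{p-1}}$ and $\Linfty(G^{\bullet+1})^{H_p}$, and that $I_{p,q}$ and $I_{p+1,q}$ are induced by the chain-level maps $\Psi \circ \Ind$ from Proposition \ref{propisomstabil}. I would first exhibit an explicit cochain representative for $\Hb^q(\Int(w))$ in these resolutions, and then compute both compositions in the square on a generic cochain.

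For the cochain representative, consider the ``left translation by $w$'' map
\[
	\mathcal{C}(w): \Linfty(G^{q+1})^{H_{p-1}} \to \Linfty(G^{q+1})^{H_p}, \qquad \mathcal{C}(w)\varphi(\vec{g}) := \varphi(w\vec{g}).
\]
This is well-defined: for $h \in H_p$, we have $wh^{-1}w^{-1} \in H_{p-1}$ by Lemma \ref{wLemma}, so
\[
	\mathcal{C}(w)\varphi(h^{-1}\vec{g}) = \varphi\bigl((wh^{-1}w^{-1})\cdot w\vec{g}\bigr) = \varphi(w\vec{g})
\]
by $H_{p-1}$-invariance of $\varphi$. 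It is a chain map because left multiplication by $w$ on $G^{q+1}$ commutes with the coordinate-omission maps of the bar differential. To see that $\mathcal{C}(w)$ actually represents $\Hb^q(\Int(w))$, factor $\Int(w) = \iota \circ \kappa$, where $\kappa: H_p \xrightarrow{\sim} wH_p w^{-1}$ is the conjugation isomorphism and $\iota: wH_p w^{-1} \hookrightarrow H_{p-1}$ is the honest subgroup inclusion. The map $\iota$ induces on cochains the tautological inclusion $\Linfty(G^{q+1})^{H_{p-1}} \hookrightarrow \Linfty(G^{q+1})^{wH_p w^{-1}}$, while $\kappa$ is realised by $\varphi \mapsto \varphi(w\,\cdot\,)$ via the naturality of the bar complex under inner conjugation in $G$.

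With this representative in hand, the commutativity reduces to a direct computation. From $\delta_i(o_p) = w^{-1}\cdot o_{p-1}$ (Construction \ref{ConGenericFlag}) and the $G$-equivariance of $\delta_i$, we get $\delta_i(g'\cdot o_p) = (g'w^{-1})\cdot o_{p-1}$ for every $g' \in G$. Since $G\cdot o_p \subset X_p$ is conull, it suffices to test equality of $\Linfty$-functions on this orbit. The clockwise composition applied to $\varphi \in \Linfty(G^{q+1})^{H_{p-1}}$ yields
\[
	\bigl(\delta^i\Psi(\Ind\varphi)(\vec{g})\bigr)(g'\cdot o_p) = \Psi(\Ind\varphi)(\vec{g})\bigl((g'w^{-1})\cdot o_{p-1}\bigr) = \varphi\bigl(w(g')^{-1}\vec{g}\bigr),
\]
while the counterclockwise composition applied to $\varphi$ yields
\[
	\bigl(\Psi(\Ind\,\mathcal{C}(w)\varphi)(\vec{g})\bigr)(g'\cdot o_p) = \mathcal{C}(w)\varphi\bigl((g')^{-1}\vec{g}\bigr) = \varphi\bigl(w(g')^{-1}\vec{g}\bigr),
\]
and the two expressions agree.

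I expect the main obstacle to be the identification of $\mathcal{C}(w)$ as the correct cochain representative of $\Hb^q(\Int(w))$: since $\Int(w)$ is not an honest inclusion of $H_p$ into $H_{p-1}$, Shapiro-type functoriality cannot be invoked directly, and one must pass through the auxiliary subgroup $wH_p w^{-1} < G$ as above. Once this functoriality statement is established, the remainder is a routine unwinding of the explicit formulas defining $\Ind$, $\Psi_0$, and the dual face map $\delta^i$.
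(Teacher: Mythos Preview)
Your proposal is correct and follows essentially the same route as the paper: the map you call $\mathcal{C}(w)$ is exactly the paper's $\Psi_{i,p,q}$, and your cochain-level verification of $\delta^i \circ \Psi \circ \Ind = \Psi \circ \Ind \circ \mathcal{C}(w)$ matches the paper's computation line for line. The only cosmetic difference is that the paper dispatches the identification of $\mathcal{C}(w)$ with $\Hb^q(\Int(w))$ by a direct citation to \cite[Prop.~8.4.2]{Monod-Book}, whereas you spell out the factorisation through $wH_p w^{-1}$.
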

\begin{proof} For all $i,p,q$ as in the lemma we consider the commutative diagram
\[\begin{xy}\xymatrix{
\bbR \ar[r]\ar[d]_{\mathrm{Id}} &  \Linfty(G) \ar[r] \ar[d]_{\Psi_{i,p,0}} & \Linfty(G^2) \ar[r]\ar[d]_{\Psi_{i,p,1}} & \dots \ar[r] &  \Linfty(G^{q+1}) \ar[r] \ar[d]_{\Psi_{i,p,q}}& \dots\\
\bbR \ar[r] &  \Linfty(G) \ar[r]  & \Linfty(G^2) \ar[r] & \dots \ar[r] &  \Linfty(G^{q+1}) \ar[r] & \dots,
}\end{xy}\]
where $\Psi_{i,p,q}(\varphi)(g_0, \dots, g_q) := \varphi(w_{p-1, i}g_0, \dots, w_{p-1, i}g_q)$. For all $h \in H_{p}$ and $\vec g \in G^{q+1}$:
\begin{align*}
\Psi_{i,p,q}(\Int(w_{p-1,i})(h).\varphi)(\vec g) =& \Int(w_{p-1,i})(h).\varphi(w_{p-1,i}.\vec g) = \varphi(w_{p-1,i} h^{-1}.\vec g)\\ =& \Psi_{i,p,q}(\varphi)(h^{-1}\vec g) = h.\Psi_{i,p,q}(\varphi)(\vec g),
\end{align*}
hence $\Psi_{i,p,q}$ restricts to a map $\Psi_{i,p,q}:  \Linfty(G^{q+1})^{H_{p-1}} \to \Linfty(G^{q+1})^{H_{p}}$ which by \cite[Prop.\ 8.4.2]{Monod-Book} induces the map $\Hb^q(\Int(w_{p-1, i}))$ in cohomology. It thus remains to show only that the diagram
\begin{equation*} 
		\xymatrixcolsep{3pc}	\xymatrix@R=18pt{\Linfty(G^{q+1};\Linfty(X_{p-1}))^{G} \ar[r]^{\delta^i} & \Linfty(G^{q+1};\Linfty(X_p))^{G} \\
		\Linfty(G^{q+1})^{H_{p-1}} \ar[u]^{\Psi \, \circ \Ind} \ar[r]^{\Psi_{i,p,q}} & \Linfty(G^{q+1})^{H_{p}} \ar[u]_{\Psi \, \circ \Ind}} 
\end{equation*}
commutes.  Now if $\varphi \in \Linfty(G^{q+1})^{H_{p-1}}$, $\vec{g} \in G^{q+1}$, and $x \in G$, then
\begin{align*}
	(\delta^i \circ \Psi \circ \Ind)(\varphi)(\vec{g})(x\cdot o_{p}) &= (\Psi \circ \Ind)(\varphi)(\vec{g})(xw_{p-1,i}^{-1} \cdot o_{p-1}) = \varphi(w_{p-1, i} x^{-1} \vec{g})  \\
	&= \Psi_{i,p,q} \varphi(x^{-1}\vec{g}) =  (\Psi \circ \Ind \circ  \Psi_{i,p,q})(\varphi)(\vec{g})(x \cdot o_{p}), 
\end{align*}
and hence $\delta^i \circ \Psi \circ \Ind = \Psi \circ \Ind \circ  \Psi_{i,p,q}$, which proves the lemma.
\end{proof}
Combining this fact with Proposition \ref{thm:E_semisimplicial}, Lemma \ref{wLemma}, Proposition \ref{propisomstabil} and \eqref{T3}, we arrive at Theorem \ref{BasicSS}. 

\section{Measurable Quillen families}\label{SecQuillenFamily}
In Definition \ref{DefInfiniteQuillen}, we have introduced the notion of an infinite measurable Quillen family. Since the quantitative version of Theorem \ref{MainThmQual} (see Theorem \ref{MainTheorem}
below) also applies to certain finite families of groups, we modify the definitions accordingly.
\begin{set}\label{MainSetting} Let $R \in \bbN \cup \{\infty\}$ be a \emph{length parameter} and let \vspace{-3pt}
\[
	\gamma, \ \tau : [R] \to \bbN \cup \{\pm\infty\} \vspace{-3pt}
\]
be two functions, called respectively the \emph{acyclicity range} and \emph{transitivity range}. We assume that we are given
\begin{itemize}
\item a locally compact group $G_r$ for every $r \in [R]$, all either second-countable or discrete; 
\item an embedding $\iota_r: G_r \hookrightarrow G_{r+1}$ for every $r < R$;
\item  a Lebesgue $G_r$-complex $X(r)$ for every $r \in [R]$.
\end{itemize}
We set $G_r := \{1\}$ for all $r < 0$. Furthermore, we keep our convention from last section that $X(r)_{-1}$ is a singleton, and we denote by $o_{r,-1}$ its only element. 
\end{set}
Generalizing Definition \ref{DefInfiniteQuillen}, we declare:
\begin{defn} We say that $(G_r,X(r))_{r \in [R]}$ is a \emph{measurable $(R,\gamma, \tau)$-Quillen family}  provided 
 \begin{enumerate}[leftmargin=37pt, label=(MQ\arabic*)]
	\item $X(r)$ is a boundedly $\gamma(r)$-acyclic Lebesgue $G_r$-complex for every $r \in [R]$;
	\item $X(r)$ is essentially $\tau(r)$-transitive Lebesgue $G_r$-complex for every $r \in [R]$;
	\item for every $r\in [R]$, there exists a generic flag  $o_{r,-1} \prec o_{r,0} \prec o_{r,1} \prec \cdots \prec o_{r,\tau(r)}$ in $X(r)$ such that the corresponding stabilizers $H_{r,q} := \stab_{G_r}(o_{r,q}) < G_r$ are compatible with $G_{r-\tau(r)}, \dots, G_{r}$ either in the sense of Condition (MQ3a), (MQ3b) or (MQ3c) from the introduction for elements $w_{r,q,i} \in G_r$ with $r \in [R], \, q \in [\tau(r)-1],$ and $i \in [q+1]$.
\end{enumerate} 
In particular, a measurable $(\infty, \gamma, \tau)$-Quillen family is the same as a measurable $(\gamma, \tau)$-Quillen in the sense of Definition \ref{DefInfiniteQuillen}. %We write $j_{r,q}: H_{r,q} \to H_{r,q-1}$ for the inclusions of stabilizers. 
%By convention, if $\gamma(r) = -\infty$, then the condition (MQ1) is empty. 
\end{defn}
Applying Theorem \ref{BasicSS} to each of the complexes $X(r)$ in a measurable Quillen family, we obtain:
\begin{cor}\label{SpectralSequencesMain} Assume that $(G_r,X(r))_{r \in [R]}$ is a measurable  $(R,\gamma, \tau)$-Quillen family. Then for every $r \in [R]$, there exists a spectral sequence ${}^r\!\EE_\bullet^{\bullet,\bullet}$ such that
\begin{enumerate}[label = \emph{(\roman*)},leftmargin=25pt]
\item ${}^r\!\EE_\bullet^{\bullet,\bullet}$ converges and ${}^r\!\EE_\infty^t = 0$ for every $t \in [\gamma(r)+1]$.
\item ${}^r\!\EE_1^{p,q} = \Hb^q(G_{r-p})$ for all $p \in [\tau(r)+1]$ and $q\geq 0$.
\item ${}^r\!\EE_2^{p,0} = 0$ for all $p \in [\tau(r)+1]$ and $q \geq 0$. 
\item For all $p \in [\tau(r)]$ and $q\geq 0$, the map $\dd_1^{p,q}:  \Hb^q(G_{r-p}) \to \Hb^q(G_{r-p-1})$ is given by
\[
\dd_1^{p,q} = \left\{\!\!\begin{array}{ll}
	\Hb^q(\iota_{r-p-1}) & \mbox{if } p \mbox{ is even,} \\
	0 & \mbox{if } p \mbox{ is odd.}
\end{array}\right.
\]
\end{enumerate}
\end{cor}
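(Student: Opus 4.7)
The plan is to apply Theorem \ref{BasicSS} to each complex $X(r)$ separately and then translate its conclusions through the compatibility condition (MQ3a), (MQ3b), or (MQ3c) that witnesses membership in a measurable Quillen family. For each $r \in [R]$, the hypotheses (MQ1) and (MQ2) together with the existence of a generic flag $o_{r,-1} \prec o_{r,0} \prec \cdots \prec o_{r,\tau(r)}$ in $X(r)$ (which is part of (MQ3)) let us invoke Theorem \ref{BasicSS} with $(G,X,\gamma_0,\tau_0)=(G_r,X(r),\gamma(r),\tau(r))$ and the flag just mentioned. This immediately produces a first-quadrant spectral sequence ${}^r\!\EE_\bullet^{\bullet,\bullet}$ satisfying items (i) and (iii), together with the preliminary first-page identification ${}^r\!\EE_1^{p,q} = \Hb^q(H_{r,p-1})$ for $p \in [\tau(r)+1]$, and the first-page differentials of the form $\dd_1^{p,q} = \sum_{i=0}^{p} (-1)^i \Hb^q(\Int(w_{r,p-1,i}))$ for $p \in [\tau(r)]$, together with $\dd_1^{0,q} = \Hb^q(\iota_{\text{inc}})$ induced by $H_{r,0} \hookrightarrow G_r$.

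The next step is to replace the bounded cohomology of stabilizers by that of the corresponding groups $G_{r-p}$, thereby obtaining item (ii) and the target form of the differentials in (iv). Under (MQ3a) this is trivial: $H_{r,p-1}=G_{r-p}$ on the nose, and $\Int(w_{r,p-1,i})=\iota_{r-p-1}$ for every admissible $i$, so every summand of $\dd_1^{p,q}$ agrees with $\Hb^q(\iota_{r-p-1})$. Under (MQ3b) or (MQ3c), the projections $\pi_{r,p}:H_{r,p}\twoheadrightarrow G_{r-p-1}$ have amenable kernel, hence induce isomorphisms $\pi_{r,p}^{*}:\Hb^q(G_{r-p-1})\xrightarrow{\sim}\Hb^q(H_{r,p})$ (this is the standard Burger--Monod fact, recorded for instance in \cite[Cor.~8.5.2]{Monod-Book}). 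In case (MQ3b), functoriality of $\Hb^q$ applied to the commutative square directly yields $\Hb^q(\Int(w_{r,p-1,i}))\circ \pi_{r,p-1}^{*} = \pi_{r,p}^{*}\circ \Hb^q(\iota_{r-p-1})$, so under the identifications $\Hb^q(H_{r,p-1})\cong\Hb^q(G_{r-p})$ and $\Hb^q(H_{r,p})\cong\Hb^q(G_{r-p-1})$, each summand $\Hb^q(\Int(w_{r,p-1,i}))$ becomes $\Hb^q(\iota_{r-p-1})$. In case (MQ3c) the homomorphic section $\sigma$ supplies the inverse to $\pi^{*}$ in bounded cohomology via $\Hb^q(\sigma)\circ\pi^{*} = \id$, and chasing the commutative triangle gives the same translation.

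Once each summand has been identified with the fixed homomorphism $\Hb^q(\iota_{r-p-1})$, independently of the index $i\in[p]$, the differential collapses to
\[
\dd_1^{p,q} = \Bigl(\sum_{i=0}^{p}(-1)^i\Bigr)\cdot \Hb^q(\iota_{r-p-1}),
\]
and the bracketed integer equals $1$ when $p$ is even and $0$ when $p$ is odd, which is precisely item (iv). The edge case $p=0$ is handled separately using part (iv) of Theorem \ref{BasicSS}, which identifies $\dd_1^{0,q}$ with the map induced by the inclusion $H_{r,0}\hookrightarrow G_r$; under any of (MQ3a)--(MQ3c) this inclusion corresponds to $\iota_{r-1}:G_{r-1}\hookrightarrow G_r$, matching the formula with $p=0$ even.

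The only mildly delicate point is the translation under (MQ3c), since the commutative diagram there only involves $\sigma_{r,p+1}$ on one side and $\pi_{r,p}$ on the other; one must note that $\Hb^q(\sigma_{r,p+1})$ is automatically the two-sided inverse of $\pi_{r,p+1}^{*}$ (because $\pi_{r,p+1}\circ\sigma_{r,p+1}=\id$ and $\pi_{r,p+1}^{*}$ is an isomorphism, so any left inverse is the inverse). With that remark in place, everything else is routine diagram chasing combined with Theorem \ref{BasicSS}.
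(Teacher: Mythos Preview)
Your proof is correct and follows essentially the same approach as the paper: apply Theorem \ref{BasicSS} to each $X(r)$, then use the compatibility condition (MQ3) together with the fact that epimorphisms with amenable kernel induce isomorphisms in continuous bounded cohomology (\cite[Cor.~8.5.2]{Monod-Book}) to transport the first-page terms and differentials from the stabilizers $H_{r,p}$ to the groups $G_{r-p-1}$. Your write-up is in fact more explicit than the paper's, spelling out the alternating-sum computation, the separate treatment of the (MQ3c) case via the section $\sigma$, and the edge case $p=0$.
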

\begin{proof}
Since the inflation maps in bounded cohomology whose inducing epimorphisms have amenable kernels are isomorphisms (see Monod \cite[Corollary 8.5.2]{Monod-Book}), Condition (MQ3) produces the following commutative diagram for all $r \in [R]$, $p \in [\tau(r)]$, $i \in [p]$ and $q \geq 0$: \vspace{-3pt}
\begin{equation*} \label{projection_cohom}
		\begin{gathered}	
		\xymatrixcolsep{5.5pc}
		\xymatrixrowsep{1.5pc}
		\xymatrix{\Hb^q(H_{r,p-1}) \ar[r]^{\Hb^q({\rm Int}(w_{r,p-1,i}))} & \Hb^q(H_{r,p})  \\
		\Hb^q(G_{r-p}) \ar[u]_{\cong}^{\Hb^q(\pi_{r,p})} \ar[r]^{\Hb^q(\iota_{r-p-1})} & \Hb^q(G_{r-p-1}) \ar[u]_{\Hb^q(\pi_{r,p-1})}^{\cong}} 
		\end{gathered} \vspace{-3pt}
\end{equation*} 
The corollary follows now from Theorem \ref{BasicSS}.
\end{proof}

If for any $s \in [R-1]$ and any $q \geq 0$ we abbreviate \vspace{-1.5pt}
\[
\HH^{q}_{s} := \Hb^{q}(G_{s}) \qand \iota_{s}^q:= \Hb^q(\iota_{s}), \vspace{-1.5pt}
\]
then the first page of ${}^r\!\EE^{\bullet, \bullet}_1$ is given by Figure \ref{fig:sampleE}. 
\begin{figure}[t!]
\[\resizebox{\hsize}{!}{
\xymatrix@R=1pt@C=6pt{
 & q & & && && && && && && && & \\
 \\
 & & \vdots && \vdots && \vdots && \vdots && \vdots && \vdots && \vdots && \vdots && \vdots \\
{\textstyle q} & & \HH_r^q \ar[rr]^-{\iota_{r-1}^q} && \HH_{r-1}^q \ar[rr]^-{0} && \HH_{r-2}^q \ar[rr]^-{\iota_{r-3}^q} && \HH_{r-3}^q \ar[rr]^-{0} && \ \cdots \ \ar[rr]^-{\color{blue} 0}_-{\color{red} \iota_{r-\tau(r)+1}^q} && \HH_{r-\tau(r)+1}^q \ar[rr]^-{\color{blue} \iota_{r-\tau(r)}^q}_-{\color{red} 0} && \HH_{r-\tau(r)}^q \ \ar[rr]^-{\color{blue} 0}_-{\color{red} \iota_{r-\tau(r)-1}^q} && \HH_{r-\tau(r)-1}^q \ar[rr]^{\!\!\!\!\!?} && {}^r\!\EE_1^{\tau(r)+2,q} \ar[r] & \cdots \\
 & & \vdots && \vdots && \vdots && \vdots && \vdots && \vdots && \vdots && \vdots && \vdots \\ \\
\ar[rrrrrrrrrrrrrrrrrrrr] & & && && && && && && && && && & \hspace{-8pt} p \\
& \ar[uuuuuuu] & {\scriptstyle 0} && {\scriptstyle 1} && {\scriptstyle 2} && {\scriptstyle 3} && {\scriptstyle \cdots} && {\scriptstyle \tau(r)-1} && {\scriptstyle \tau(r)} && {\scriptstyle \tau(r)+1} & \ar@{..}[uuuuuuu] & {\scriptstyle \tau(r)+2} & \cdots
}}
\]	
\vspace{-10pt}

\caption{First page ${}^r\!\EE_1^{\bullet,\bullet}$.}
\label{fig:sampleE}
\end{figure}
The terms to the left of the dotted line are explained by Corollary \ref{SpectralSequencesMain} (ii), while the behavior of the terms to the right of the dotted line is \emph{a priori} not understood. According to Corollary \ref{SpectralSequencesMain} (iii), the parity of $\tau(r)$ determines the pattern of the arrows just to the left of the dotted line: the blue labels correspond to the case in which $\tau(r)$ is odd, and the red ones underneath correspond to the even case. 
\begin{rem}  Let $(G_r,X(r))_{r \in [R]}$ be a measurable  $(R, \gamma, \tau)$-Quillen family. We say that $q_0 \in \mathbb N$ is an \emph{initial parameter} for this measurable Quillen family if
\[
 \Hb^q(\iota_r):  \Hb^q(G_{r+1}) \to \Hb^q(G_r) \text{ is an isomorphism for all }q\leq q_0 \text{ and }r<R. 
\]
In theory, one would always like to work with the optimal initial parameter
\[
q_0 := \sup\{q \mid \Hb^q(\iota_r): \Hb^q(G_{r+1}) \to \Hb^q(G_r) \text{ is an isomorphism for all }r<R\},
\]
but in practice, the optimal initial parameter is not known. If no \emph{a priori} infomation about the bounded cohomology of the group $G_r$ is available, then one can always work with $q_0 := 1$, since $\Hb^1(G)$ vanishes for lcsc groups $G$. In some cases of interest, better initial parameters are available. Notably, if all $G_r$ are connected semisimple Lie groups with finite center and either all of the groups $G_r$ are of Hermitian type or all are of non-Hermitian type, then $q_0 \geq 2$. 
\end{rem}
To state the quantitative version of our bc-stability criterion, we introduce the following
\begin{defn} Let $(G_r,X(r))_{r \in [R]}$ be a measurable  $(R, \gamma, \tau)$-Quillen family with initial parameter $q_0 \in \mathbb N$. We define the associated \emph{dual acyclicity range} and \emph{dual transitivity function} to be functions $\widetilde{\gamma},\widetilde{\tau}:\,\bbN\times [R-1] \to \bbN \cup \{\pm\infty\}$ defined by the formulae 
\begin{equation*}
	\widetilde{\gamma}(q, r) := \min_{j=q_0}^q \big\{\gamma\big(r+1-2(q-j)\big) - j\big\} \qand \widetilde{\tau}(q,r) :=  \min_{j=q_0}^q \big\{\tau\big(r+1-2(q-j)\big) - j\big\}.\vspace{3pt}
\end{equation*}
for $r + 1 - 2(q-q_0) \geq 0$, and $\widetilde\gamma(q,r) = \widetilde\tau(q,r) = -\infty$ otherwise. 
\end{defn}
We state now our main result:
\begin{thm}\label{MainTheorem} Let $(G_r,X(r))_{r \in [R]}$ be a measurable  $(R, \gamma, \tau)$-Quillen family with initial parameter $q_0 \geq 1$ and associated dual acyclicity range $\widetilde{\gamma}$ and dual transitivity function $\widetilde{\tau}$. Then the inclusion $\iota_r$ induces an isomorphism (resp. an injection) 
\[\Hb^q(\iota_r): \Hb^q(G_{r+1}) \, \xrightarrow{\cong \ } \, \Hb^q(G_r) \quad (\text{resp. }\Hb^{q+1}(\iota_r): \Hb^{q+1}(G_{r+1}) \hookrightarrow \Hb^{q+1}(G_r)).
\] 
provided $r \in [R-1]$ and $q \geq 0$ satisfy the condition
\begin{equation}
\min\{\widetilde{\gamma}(q,r), \widetilde{\tau}(q,r)-1\} \geq 0.
\end{equation}
% $\min\{\widetilde{\gamma}(q,r), \widetilde{\tau}(q,r)-1\} \geq 0$ the inclusion $\iota_r$ induces the isomorphism (resp. the injection) 
%\[\Hb^q(\iota_r): \Hb^q(G_{r+1}) \, \xrightarrow{\cong \ } \, \Hb^q(G_r) \quad (\text{resp. }\Hb^{q+1}(\iota_r): \Hb^{q+1}(G_{r+1}) \hookrightarrow \Hb^{q+1}(G_r)).
%\] 
\end{thm}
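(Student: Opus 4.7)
The plan is to proceed by induction on $q$, using for each valid $r$ the spectral sequence ${}^{r+1}\!\EE_\bullet^{\bullet,\bullet}$ from Corollary \ref{SpectralSequencesMain} associated to $X(r+1)$. The key observation is that, by Corollary \ref{SpectralSequencesMain}(iv), the first-page differential $\dd_1^{0,q}$ is precisely $\Hb^q(\iota_r)$. Consequently $\ker \Hb^q(\iota_r) = {}^{r+1}\!\EE_2^{0,q}$ and $\coker \Hb^q(\iota_r) = {}^{r+1}\!\EE_2^{1,q}$, so the isomorphism claim reduces to showing that both of these $E_2$ entries vanish, while the injection claim in degree $q+1$ reduces to ${}^{r+1}\!\EE_2^{0,q+1}=0$.

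For the base case $q \leq q_0$, the isomorphism is immediate from the definition of initial parameter. For the inductive step, fix $q > q_0$ and $r \in [R-1]$ with $\min\{\widetilde{\gamma}(q,r), \widetilde{\tau}(q,r)-1\} \geq 0$. Specializing the defining minima at $j = q$ yields $\gamma(r+1) \geq q$ and $\tau(r+1) \geq q+1$. The first inequality forces ${}^{r+1}\!\EE_\infty^{0,q} = {}^{r+1}\!\EE_\infty^{1,q} = 0$ by Corollary \ref{SpectralSequencesMain}(i), since both entries lie in total degree at most $q+1 \leq \gamma(r+1)+1$. The second inequality ensures that all outgoing differentials from the positions $(0,q)$ and $(1,q)$ on pages $k \geq 2$ land in columns of index at most $q+2 \leq \tau(r+1)+1$, a range where Corollary \ref{SpectralSequencesMain}(ii)--(iv) apply and the $E_2$ entries are described by the formula.

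Since every $E_k$ term is a subquotient of the corresponding $E_2$ term, it suffices to show that every such target, which is of the form ${}^{r+1}\!\EE_2^{p,q'}$ with $q' < q$, vanishes. By Corollary \ref{SpectralSequencesMain}(iv), each of these entries is either a kernel or a cokernel of some map $\Hb^{q'}(\iota_{r'})$ with $q' < q$, which vanishes by the inductive hypothesis, provided the shifted numerical condition $\min\{\widetilde{\gamma}(q', r'), \widetilde{\tau}(q', r')-1\} \geq 0$ holds at $(q',r')$. A direct substitution $j' = j-(q-q')$ in the minima defining the dual ranges confirms that this shifted condition is implied by the original condition at $(q,r)$; this is precisely the design principle behind the definitions of $\widetilde{\gamma}$ and $\widetilde{\tau}$. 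Once all such targets vanish, the outgoing differentials from $(0,q)$ and $(1,q)$ are zero, so ${}^{r+1}\!\EE_2^{0,q} = {}^{r+1}\!\EE_\infty^{0,q} = 0$, and similarly ${}^{r+1}\!\EE_2^{1,q} = 0$.

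The injection statement at degree $q+1$ is proved by the same method applied to the entry $(0, q+1)$: its $E_\infty$-vanishing again follows from $\gamma(r+1) \geq q$, while the vanishing of outgoing targets now uses both the inductive hypothesis at strictly smaller degrees and the isomorphism at degree $q$ just established at this same inductive step. The main obstacle is the careful bookkeeping in the reduction to shifted pairs $(q',r')$ and verifying that all relevant differentials remain within the column range controlled by the first-page formula; these checks amount to routine reindexings of the defining minima. Boundary cases where $r' < 0$ are handled by the convention $G_s = \{1\}$ for $s < 0$, whose bounded cohomology vanishes in positive degrees, and by the fact that $\widetilde\gamma,\widetilde\tau$ are set to $-\infty$ when the argument of $\gamma$ or $\tau$ falls below $0$.
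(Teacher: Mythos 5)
Your proposal follows the paper's proof in all essentials: induction on $q$, the spectral sequence ${}^{r+1}\!\EE_\bullet^{\bullet,\bullet}$ of Corollary \ref{SpectralSequencesMain}, identification of the kernel and cokernel of $\Hb^q(\iota_r)$ with the $E_2$-entries in columns $0$ and $1$, and vanishing of the targets of the higher differentials via the inductive hypothesis combined with reindexing of the minima defining $\widetilde\gamma$ and $\widetilde\tau$ (the paper isolates these reindexings as Lemma \ref{Combinatorics}). One organizational difference: you extract injectivity of $\Hb^q(\iota_r)$ from ${}^{r+1}\!\EE_2^{0,q}=0$, which forces you to also kill the diagonal $p'+q'=q+1$; the paper instead obtains this injectivity directly from the injection half of the inductive statement at degree $q-1$ (at the same index $r$), and therefore only needs the diagonal $p'+q'=q+2$. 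Your extra diagonal does vanish under the same hypotheses, so this costs only a little more bookkeeping.

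One step, however, is justified incorrectly, even though the claim itself is true. The $\dd_2$-differential out of $(0,q+1)$ lands in $(2,q)$, i.e.\ in row $q$, not in a row of strictly smaller degree, and ${}^{r+1}\!\EE_2^{2,q} \cong \ker\Hb^q(\iota_{r-2})$. You propose to kill this entry using ``the isomorphism at degree $q$ just established,'' but that isomorphism concerns $\Hb^q(\iota_r)$, a different map; it says nothing about $\Hb^q(\iota_{r-2})$, and you cannot invoke the degree-$q$ statement at the index $r-2$, since your induction is on $q$ only and the numerical condition at $(q,r-2)$ is not implied by the one at $(q,r)$. The correct source is the \emph{injection} half of the inductive statement at degree $q-1$, applied at the shifted pair $(q-1,r-2)$: since $r-2+1-2(q-1-j)=r+1-2(q-j)$, one has $\widetilde\gamma(q-1,r-2)\ge\widetilde\gamma(q,r)$ and $\widetilde\tau(q-1,r-2)\ge\widetilde\tau(q,r)$, so the inductive hypothesis yields the injectivity of $\Hb^{q}(\iota_{r-2})$ (this is Lemma \ref{ABC}(C) with $p=0$ in the paper). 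Because you induct on the full statement of the theorem, which contains the injection clause, this fix is available within your framework; but as written, the justification for the entry $(2,q)$ does not go through. The same care is needed in your base case: for $q=q_0$ the initial parameter gives the isomorphism in degree $q_0$ but not the injection in degree $q_0+1$, which must already be run through the spectral-sequence argument.
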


An immediate corollary of this quantitative bc-stability result is Theorem \ref{MainThmQual}, as stated in the introduction.
\begin{proof}[Proof of Theorem \ref{MainThmQual} from Theorem \ref{MainTheorem}] We may assume without loss of generality that $q_0 = 1$. Since $\gamma(r) \to \infty$ and $\tau(r) \to \infty$, we find for every $q \geq q_0$ some $r(q) \in \mathbb N$ such that for all $j \in \{q_0, \ldots, q\}$ and all $r \geq r(q)$, we have
\[
 \gamma\big(r+1-2(q-j)\big) \geq  j \qand  \tau\big(r+1-2(q-j)\big) \geq j+1.
\]
Then Theorem \ref{MainTheorem} implies the chain of isomorphisms \vspace{-3pt}
\[
	\Hb^q(G_{r(q)}) \xleftarrow{\ \cong} \, \Hb^q(G_{r(q)+1}) \xleftarrow{\ \cong} \, \Hb^q(G_{r(q)+2}) \xleftarrow{\ \cong} \, \cdots,
\] 
which is the desired bc-stability.
\end{proof}
The functions $\widetilde{\gamma}(q,r)$ and $\widetilde{\tau}(q,r)$ have been defined so that they satisfy the following lemma, which will allow us to make inductive arguments:
\begin{lem}\label{Combinatorics} Let $r,\, q \in \bbN$, $q \geq q_0$, and assume that $\min\{\widetilde{\gamma}(q,r), \widetilde{\tau}(q,r)-1\} \geq 0$. Then the following hold:
\begin{enumerate}[label = \emph{(\roman*)},leftmargin=25pt]
\item $\gamma(r+1) \geq q$ and $\tau(r+1) \geq q+1$. %FormerD
\item $\min\{\widetilde{\gamma}(q-1,r),\widetilde{\tau}(q-1,r)-1\} \geq 0$. %FormerA
\item If $p \in [q-q_0]$ is odd, then $\min\{\widetilde{\gamma}(q-p, r-p-1),\widetilde{\tau}(q-p, r-p-1)-1\} \geq 0$. %FormerB
\item If $p \in [q-q_0]$ is even, then $\min\{\widetilde{\gamma}(q-p-1,r-p-2),\widetilde{\tau}(q-p-1,r-p-2)-1\} \geq 0$. %FormerC
\end{enumerate}
\end{lem}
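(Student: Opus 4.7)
The plan is to reduce all four claims to a single observation about the auxiliary sequence
\begin{equation*}
r_s := r + 1 - 2(q - s), \qquad s = q_0, q_0+1, \ldots, q.
\end{equation*}
Unwinding the definitions of $\widetilde{\gamma}$ and $\widetilde{\tau}$, the hypothesis $\min\{\widetilde{\gamma}(q,r), \widetilde{\tau}(q,r) - 1\} \geq 0$ is equivalent to the pointwise inequalities
\begin{equation*}
\gamma(r_s) \geq s \qquad \text{and} \qquad \tau(r_s) \geq s + 1 \qquad \text{for all } s \in \{q_0, \ldots, q\}.
\end{equation*}
The strategy for each of the four conclusions will then be uniform: rewrite the relevant $\widetilde{\gamma}(q', r')$ as $\min_{s \in I}\{\gamma(r_s) - (s - \ell)\}$ for some sub-range $I \subset \{q_0, \ldots, q\}$ and integer shift $\ell$ (and analogously for $\widetilde{\tau}$), and apply the pointwise bound term by term.

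Part (i) is just the $s = q$ instance of the pointwise bound, since $r_q = r + 1$. For part (ii), the substitution $s = j + 1$ will give $\widetilde{\gamma}(q-1, r) = \min_{s=q_0+1}^{q}\{\gamma(r_s) - (s-1)\} \geq 1$ and $\widetilde{\tau}(q-1, r) = \min_{s=q_0+1}^{q}\{\tau(r_s) - (s-1)\} \geq 2$, both adequate.

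The core cases are (iii) and (iv), where I would use the identities
\begin{equation*}
(r - p - 1) + 1 - 2\bigl((q - p) - j\bigr) = r_{j + (p-1)/2} \qquad\text{and}\qquad (r - p - 2) + 1 - 2\bigl((q - p - 1) - j\bigr) = r_{j + p/2}.
\end{equation*}
The parity assumptions on $p$ are precisely what make the shifts $(p-1)/2$ and $p/2$ integers, which is exactly what is needed to place the resulting index back into $\{q_0, \ldots, q\}$. Substituting and applying the pointwise bound will then yield $\widetilde{\gamma}(q-p, r-p-1) \geq (p-1)/2$ and $\widetilde{\tau}(q-p, r-p-1) \geq 1 + (p-1)/2$ in case (iii), and analogously $\widetilde{\gamma}(q-p-1, r-p-2) \geq p/2$ and $\widetilde{\tau}(q-p-1, r-p-2) \geq 1 + p/2$ in case (iv), all of which deliver the required bound.

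The only subtlety I foresee is that in case (iv) the sub-range can be empty (namely when $p = q - q_0$ is even, forcing $q - p - 1 = q_0 - 1 < q_0$), but then the minimum is vacuous and the claim is trivially satisfied under the convention $\min \emptyset = +\infty$. There is no deeper obstacle: the whole content of the lemma is that the step size $2$ of the arithmetic progression $r_s$ is exactly matched by the parity dichotomy appearing in (iii) and (iv).
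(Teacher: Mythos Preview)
Your proposal is correct and follows essentially the same approach as the paper's proof: both unwind the hypothesis into the pointwise bounds $\gamma(r+1-2(q-j)) \geq j$ and $\tau(r+1-2(q-j)) \geq j+1$, and both verify (iii) and (iv) via the very same index-shift identities (your $s = j + (p-1)/2$ and $s = j + p/2$ are exactly the substitutions the paper uses). Your $r_s$ notation is a slightly cleaner packaging, and your handling of the empty-range edge case matches the paper's treatment of the analogous boundary case in (ii).
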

\begin{proof} The assumption means that for all $j \in \{q_0, \dots, q\}$ we have
\begin{align} 
	r+1-2(q-j) &\geq 0, \qand \label{eq:cond_isom_welldef} \\[-3pt]
	\gamma\big(r+1-2(q-j)\big) &\geq j, \quad \tau\big(r+1-2(q-j)\big) \geq j + 1.\label{eq:cond_isom_bis} \vspace{-5pt}
\end{align}
\begin{enumerate}[label=(\roman*),leftmargin=25pt]
\item Choose $j=q$ in \eqref{eq:cond_isom_bis}.
\item If $q=q_0$, then $r+1-2(q-1-q_0) \geq 0$ and $\widetilde\gamma(q-1,r) = \widetilde\tau(q-1,r) = \infty$ for any $r$. Assume now that $q > q_0$, and let $j \in \{q_0,\ldots,q-1\}$. Observe that 
\[
	r+1-2(q-1-j) = r+1-2\big(q-(j+1)\big). \vspace{-1pt}
\]
Since $j+1 \in \{q_0,\ldots,q\}$, from this equality combined with \eqref{eq:cond_isom_welldef} and \eqref{eq:cond_isom_bis} we deduce  \vspace{-1pt}
\begin{align*}
	\qquad r+1-2(q-1-j) &\geq 0, \qand \\[-3pt]
	\qquad \gamma\big(r+1-2(q-1-j)\big) &\geq j+1 > j, \quad \tau\big(r+1-2(q-1-j)\big) \geq j+2 > j + 1,
\end{align*} 
which establishes the claim. 

\item Observe that for any odd $p \in [q-q_0]$ and any $j \in \{q_0,\ldots,q-p\}$, we have 
\[
	r-p - 2(q-p-j) = r+1 -2 \left(q-\left(j+\frac{p-1}2\right)\right), 
\]
and $j+(p-1)/2 \in \{q_0 + (p-1)/2,\ldots,q-(p+1)/2\} \subset \{q_0,\ldots,q\}$. Just as in (ii), the claim follows now from \eqref{eq:cond_isom_welldef} and \eqref{eq:cond_isom_bis}.

\item 
For any even $p \in [q-q_0]$ and any $j \in \{q,\ldots,q-p\}$, we have
\[
	r-p-1-2(q-p-1-j) = r+1-2\left(q-\left(j+\frac{p}2\right)\right),
\]
and $j + p/2 \!\in\! \{q_0 + p/2,\ldots,q-p/2\} \!\subset\! \{q_0,\ldots,q\}$, and conclude by \eqref{eq:cond_isom_welldef} and \eqref{eq:cond_isom_bis}. \qedhere
\end{enumerate}
\end{proof}
\section{Proof of the main theorem}
The remainder of this article is devoted to deriving Theorem \ref{MainTheorem} from Corollary \ref{SpectralSequencesMain}. We consider a measurable  $(R, \gamma, \tau)$-Quillen family $(G_r,X(r))_{r \in [R]}$ with initial parameter $q_0 \geq 1$, dual acyclicity range $\widetilde{\gamma}$ and dual transitivity function $\widetilde{\tau}$ and abbreviate
\[
\HH^{q}_{s} := \Hb^{q}(G_{s}) \qand \iota_{s}^q:= \Hb^q(\iota_{s})
\]
as before. We consider the following statement for $q \in \mathbb N$ and $r < R$:
\begin{itemize}
\item[{($S_{q,r}$)}] $\iota^q_r: \HH^q_{r+1} \to \HH^q_r$ is an isomorphism and $\iota^{q+1}_{r}: \HH^{q+1}_{r+1} \to \HH^{q+1}_r$ is an injection.
\end{itemize}
The conclusion of Theorem \ref{MainTheorem} then says that for all $q \in \bbN$, the following statement ($S_q$) holds true:
\begin{itemize}
\item[{($S_q$)}] If $r<R$ satisfies $\min\{\widetilde{\gamma}(q,r), \widetilde{\tau}(q,r)-1\} \geq 0$, then ($S_{q,r}$) holds.
\end{itemize}
If $q < q_0$, then $r + 1 - 2(q-q_0) \geq 0$ and hence by definition $\widetilde{\gamma}(q,r) = \widetilde{\tau}(q,r) = \infty$. Since $q_0$ is an initial parameter, both $\iota^q_r$ and $\iota^{q+1}_r$ are isomorphisms for all $r < R$. Thus ($S_q$) holds for all $q < q_0$. We now proceed to prove ($S_q$) for $q \geq q_0$ by induction on $q$. For this purpose, assume that $q \geq q_0$ and that ($S_{q'}$) holds for all $q' < q$. In view of Lemma \ref{Combinatorics}, we have:
\begin{lem}\label{ABC} For $r < R$ with $\min\{\widetilde{\gamma}(q,r), \widetilde{\tau}(q,r)-1\} \geq 0$, the following hold:
\begin{enumerate}[label=\emph{(\Alph*)}]
	\item The map $\iota_{r}^q$ is injective. %and an isomorphism for $p \in \{1,\ldots,q\}$ even,
	\item The map $\iota^{q-p}_{r-p-1}$ is an isomorphism for all odd numbers $p \in [q]$.
	\item The map $\iota_{r-p-2}^{q-p}$ is an injection for all even numbers $p \in [q]$.
\end{enumerate} 
\end{lem}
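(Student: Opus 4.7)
The plan is to establish (B) and (C) first from the induction hypothesis $(S_{q'})$ for $q'<q$, and then to deduce (A) by analyzing the spectral sequence ${}^{r+1}\!\EE_\bullet^{\bullet,\bullet}$ from Corollary \ref{SpectralSequencesMain} associated with $G_{r+1}$, using (B) together with a further direct invocation of induction to kill the entries that receive the higher differentials emanating from ${}^{r+1}\!\EE_k^{0,q}$.

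Claim (B) follows by inspection. For any odd $p\in[q]$ one has $p\geq 1$, hence $q-p<q$, so $(S_{q-p})$ is available by induction. When $q-p\geq q_0$, Lemma \ref{Combinatorics}(iii) guarantees that $\min\{\widetilde{\gamma}(q-p,r-p-1),\widetilde{\tau}(q-p,r-p-1)-1\}\geq 0$ and $(S_{q-p,\,r-p-1})$ provides the desired isomorphism $\iota^{q-p}_{r-p-1}$; when $q-p<q_0$, it is furnished by the initial parameter property. Claim (C) is proved in the same manner, this time invoking the injectivity half of $(S_{q-p-1,\,r-p-2})$ with the hypothesis supplied by Lemma \ref{Combinatorics}(iv) (the edge case $p=q$ is trivial because $\iota^{0}_{r'}$ acts as the identity on $\Hb^{0}=\mathbb{R}$).

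For (A), consider ${}^{r+1}\!\EE_\bullet^{\bullet,\bullet}$. By Lemma \ref{Combinatorics}(i) one has $\gamma(r+1)\geq q$ and $\tau(r+1)\geq q+1$, so the entries ${}^{r+1}\!\EE_1^{p,q'}=\HH^{q'}_{r+1-p}$ with differentials $\dd_1^{p,q'}=\iota^{q'}_{r-p}$ (for even $p$) or $0$ (for odd $p$) are defined for all $p$ and $q'$ we shall need, and convergence forces ${}^{r+1}\!\EE_\infty^{0,q}=0$. Since ${}^{r+1}\!\EE_2^{0,q}=\ker(\iota^{q}_{r})$ receives no incoming higher differentials, it suffices to show that every outgoing differential $\dd_k^{0,q}$ with $k\geq 2$ vanishes; because ${}^{r+1}\!\EE_k^{k,q-k+1}$ is a subquotient of ${}^{r+1}\!\EE_2^{k,q-k+1}$, this reduces to showing
\[
{}^{r+1}\!\EE_2^{k,q-k+1}\;=\;0\qquad\text{for every }\ k\in\{2,\dots,q+1\}.
\]

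Reading off ${}^{r+1}\!\EE_2$ from the alternating pattern of $\dd_1$ yields
\[
{}^{r+1}\!\EE_2^{k,q-k+1}\;=\;\begin{cases}\ker(\iota^{q-k+1}_{r-k}),& k\text{ even},\\[3pt]\coker(\iota^{q-k+1}_{r-k+1}),& k\text{ odd}.\end{cases}
\]
The even case is immediate from (B) with $p=k-1$. The odd case $k\geq 3$ is the main obstacle, because the required surjectivity of $\iota^{q-k+1}_{r-k+1}$ is not furnished by (B) (whose isomorphism lives at $(q-p,r-p-1)$ with an incompatible shift) nor by (C) (which gives only injectivity). The plan is to dispose of it by applying the inductive hypothesis $(S_{q-k+1})$ directly at $r'=r-k+1$: the substitution $j'=j+(k-1)/2$ (an integer since $k$ is odd) converts the stability hypothesis $\min\{\widetilde{\gamma}(q,r),\widetilde{\tau}(q,r)-1\}\geq 0$ into its counterpart at $(q-k+1,r-k+1)$, yielding the full isomorphism $\iota^{q-k+1}_{r-k+1}$ and hence cokernel vanishing. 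Assembling, ${}^{r+1}\!\EE_2^{0,q}={}^{r+1}\!\EE_\infty^{0,q}=0$, which is exactly (A).
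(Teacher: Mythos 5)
Your parts (B) and (C) coincide with the paper's proof: in each case you invoke the inductive statement $(S_{q-p})$ resp.\ $(S_{q-p-1})$ at the shifted index supplied by Lemma \ref{Combinatorics}(iii)/(iv), falling back on the initial parameter when the degree drops below $q_0$. For part (A), however, you take a genuinely different and much heavier route. The paper's proof of (A) is a one-liner: Lemma \ref{Combinatorics}(ii) gives $\min\{\widetilde{\gamma}(q-1,r),\widetilde{\tau}(q-1,r)-1\}\geq 0$, so the inductive statement $(S_{q-1,r})$ applies, and its second half literally asserts that $\iota_r^{(q-1)+1}=\iota_r^q$ is injective --- the injectivity in degree $q$ is already built into the statement one degree down. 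You instead re-run the spectral sequence ${}^{r+1}\!\EE_\bullet^{\bullet,\bullet}$, identify $\ker(\iota_r^q)$ with $\EE_2^{0,q}$, and kill the diagonal $p'+q'=q+1$; this forces you to handle the odd-$k$ cokernel terms $\coker(\iota^{q-k+1}_{r-k+1})$, which are not covered by (B) or (C) and require a fresh combinatorial verification (your substitution $j'=j+(k-1)/2$, which does check out against the definitions of $\widetilde\gamma$ and $\widetilde\tau$). Your argument is correct, but it essentially duplicates, inside the lemma, a piece of the diagonal-vanishing computation that the paper carries out afterwards in the main proof (there along $p'+q'=q+2$, where (B) and (C) suffice); recognizing that $(S_{q-1,r})$ already contains the degree-$q$ injection would have saved you the entire detour.
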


\begin{figure}[b!]
\hspace{-25pt} 
\[\resizebox{0.75\hsize}{!}{
\xymatrixrowsep{0.5pc}
\xymatrix@R=2pt@C=1pt{
 & q' \\
{\scriptstyle q+1} & & \EE_2^{0,q+1} \ar[rrrrd] && \ast && && && && && && && && \\
{\scriptstyle q} & & \ast && \EE_2^{1,q} \ar[rrrrd] && 0 && \ast && && && && && && \\
{\scriptstyle q-1} & & && && \ast && 0 && \ast && && && && && \\
{\scriptstyle q-2} & & && && && \ast && 0 && \ast && && && && \\ 
\vdots & & && && && && && \ddots &\ddots & \ddots \\ 
{\scriptstyle 1} & & && && && && && && \ast && 0 && \ast && \\ 
{\scriptstyle 0} & & && && && && && && && \ast && 0 && \ast \\ 
\ar[rrrrrrrrrrrrrrrrrrrrrrr] & & && && && && && && && && && && & \hspace{-8pt} & p' \\
& \ar[uuuuuuuuu] & {\scriptstyle 0} && {\scriptstyle 1} && {\scriptstyle 2} && {\scriptstyle 3} &&  {\scriptstyle 4} && {\scriptstyle 5} & \cdots & {\scriptstyle q} && {\scriptstyle q+1} && {\scriptstyle q+2} && {\scriptstyle q+3} 
}
}\]
\caption{Second page $\EE_2^{\bullet,\bullet}$}
\label{fig:IIE_2}
\end{figure}

\begin{proof} (A) By Lemma \ref{Combinatorics} (ii), the inequality $\min\{\widetilde{\gamma}(q-1,r),\widetilde{\tau}(q-1,r)-1\} \geq 0$ holds. Thus, we may apply ($S_{q-1}$) to deduce the claim.

(B) If $p > q - q_0$, then $q-p < q_0$ and hence $\iota^{q-p}_{r-p-1}$ is an isomorphism by the initial condition. If $p \leq q-q_0$, then by Lemma \ref{Combinatorics} (iii) we have $\min\{\widetilde{\gamma}(q-p, r-p-1),\widetilde{\tau}(q-p, r-p-1)-1\} \geq 0$. We may thus use ($S_{q-p}$) to show that ($S_{q-p, r-p-1}$) is true, implying the claim.

(C) The case $p > q-q_0$ is again covered by the initial condition, and if $p \leq q-q_0$, then by Lemma \ref{Combinatorics} (iv) we have $\min\{\widetilde{\gamma}(q-p-1,r-p-2),\widetilde{\tau}(q-p-1,r-p-2)-1\} \geq 0$. Thus, by ($S_{q-p-1}$), the statement ($S_{q-p-1, r-p-2}$) applies.
\end{proof}
To establish ($S_q$), we fix a natural number $r < R$ such that $\min\{\widetilde{\gamma}(q,r), \widetilde{\tau}(q,r)-1\} \geq 0$ and consider the spectral sequence $\EE_\bullet^{\bullet,\bullet} := {}^{r+1}\!\EE_\bullet^{\bullet,\bullet}$  from Corollary \ref{SpectralSequencesMain}. By Lemma \ref{Combinatorics} (i), we have
\begin{equation}\label{D}
	\gamma(r+1) \geq q \qand \tau(r+1) \geq q+1,
\end{equation} 
and, thus, Corollary \ref{SpectralSequencesMain} gives
\begin{equation} \label{eq:Gr+1}
\begin{gathered}
\begin{array}{lll}
	\EE_1^{p',q'} \!\!\!&= \HH^{q'}_{r+1-p'} &  \mbox{ for } p' \in [q+2],\, q' \geq 0, \\[2pt]
	\EE_2^{p', 0} \!\!\!&= 0 & \mbox{ for } p' \in [q+2],\\
	\dd_1^{p',q'} \!\!\!&= \left\{\!\!\begin{array}{ll}
	\iota^{q'}_{r-p'} & \mbox{if } p' \mbox{ is even,} \\
	0 & \mbox{if } p' \mbox{ is odd,}
\end{array}\right.  &  \mbox{ for } p' \in [q+1],\, q' \geq 0,
\\
	\EE_\infty^{t} \!\!\!&= 0 & \mbox{ for } t \in [q + 1].
\end{array}
\end{gathered}
\end{equation}
The second equality is \eqref{T3}. We are going to show now that
\begin{equation}\label{T1}
\EE_2^{0, q+1} \cong \ker(\iota^{q+1}_r) \qand 
\EE_2^{1,q} \cong \coker(\iota^{q}_{r}), \vspace{-13pt}
\end{equation}
\begin{equation}\label{T2}
\EE_2^{p', q'} = 0 \text{ for all }(p',q') \in \{0,\dots, q\}^2 \text{ with }p'+q' = q+2.
\end{equation}

These computations will be sufficient to establish ($S_q$): As indicated in Figure \ref{fig:IIE_2}, the differentials emanating from $\EE_2^{0, q+1}$ and $\EE_2^{1,q}$ as well as all those emanating from the positions $(0,q+1)$ and $(1,q)$ on any of the following pages end up on the diagonal $p'+q' = q+2$. Since this diagonal contains only zeros up to row $q$, we conclude that 
 \[
\EE_\infty^{0, q+1} \cong \EE_2^{0, q+1} \cong \ker(\iota^{q+1}_r)%, \quad E_2^{0,q} = \ker(\iota_r^q) 
\qand \EE_\infty^{1,q} \cong \EE_2^{1,q} \cong \coker(\iota^{q}_{r}).
\]
In particular, it follows that 
$
\ker(\iota^{q+1}_r) \oplus \coker(\iota^{q}_{r}) \hookrightarrow E_\infty^{q+1} = 0.
$
This gives the injectivity of $\iota_r^{q+1}$ and the surjectivity of $\iota_r^q$. Since $\iota_r^q$ is injective by (A), the claim ($S_q$) is proven. We have thus reduced the proof of Theorem \ref{MainTheorem} to \eqref{T1} and \eqref{T2}.

To prove \eqref{T1} and \eqref{T2}, we only need to consider the few arrows on the first page of the spectral sequence depicted in Figure \ref{fig:IIE_1}. 
The statement \eqref{T1} is immediate from the fact that the leftmost maps in the $q$-th and $(q+1)$-th row of $\EE_1^{\bullet, \bullet}$ are respectively given by
\[
\HH^q_{r+1} \xrightarrow{\iota_r^q} \HH^q_{r} %\xrightarrow{0} \HH^{q}_{r-1} 
\qand \HH^{q+1}_{r+1} \xrightarrow{\iota^{q+1}_r} \HH^{q+1}_{r} %\xrightarrow{0} \HH^{q+1}_{r-1}.
\]

As for \eqref{T2}, we first consider $q' \in \{1, \ldots, q\}$ and define $p' := q-q' \in [q-1]$ so that $p'+q' = q$. By \eqref{eq:Gr+1}, the two maps
$% \begin{equation*}
\EE_1^{p'+1,q'} \to \EE_1^{p'+2,q'}  \to \EE_1^{p'+3,q'}  
$%\end{equation*}
are given by
\[
\left\{ \begin{array}{ll}
	 \HH^{q-p'}_{r-p'} \xrightarrow{0} \HH^{q-p'}_{r-p'-1} \xrightarrow{\iota^{q-p'}_{r-p'-2}} \HH^{q-p'}_{r-p'-2}, & \text{if }p' \text{ is even,} \\
	 \HH^{q-p'}_{r-p'} \xrightarrow{\iota^{q-p'}_{r-p'-1}} \HH^{q-p'}_{r-p'-1} \xrightarrow{0} \HH^{q-p'}_{r-p'-2}, & \text{if }p' \text{ is odd.}  
\end{array} \right.
\]
From (B) and (C) of Lemma \ref{ABC}, we deduce that 
\[
\EE_2^{p'+2, q'} = 0 \quad \text{for all } (p', q') \in [q]^2  \text{ with }p'+q' = q,
\]
where the case $(p',q')=(q,0)$ is covered in \eqref{eq:Gr+1}. This concludes the proof of \eqref{T2}. \qed
\begin{figure}[t!]
\[\resizebox{0.75\hsize}{!}{
\xymatrix@R=1pt@C=0.5pt{
 & q' \\
{\scriptstyle q+1} & & \HH^{q+1}_{r+1} \ar[rr] && \HH^{q+1}_{r} && && && && && && && && \\
{\scriptstyle q} & & \HH^{q}_{r+1}  \ar[rr] && \HH^{q}_{r}  \ar[rr] && \HH^{q}_{r-1} \ar[rr] && \HH^{q}_{r-2} && && && && && && \\
{\scriptstyle q-1} & & && && \ar[rr] \HH^{q-1}_{r-1} \ar[rr] && \HH^{q-1}_{r-2} \ar[rr] && \HH^{q-1}_{r-3} && && && && && \\
{\scriptstyle q-2} & & && && && \HH^{q-2}_{r-2} \ar[rr] && \HH^{q-2}_{r-3} \ar[rr] && \HH^{q-2}_{r-4} && && && && \\
\vdots & & && && && && && \ddots & \ddots & \ddots \\
{\scriptstyle 1} & & && && && && && && \HH^{1}_{r-q+1} \ar[rr] && \HH^{1}_{r-q} \ar[rr] && \HH^{1}_{r-q-1} && \\  
{\scriptstyle 0} & & && && && && && && && \HH^0_{r-q} \ar[rr] && \HH^0_{r-q-1} \ar[rr] && E_1^{q+3,0} \\ 
\ar[rrrrrrrrrrrrrrrrrrrrrrr] & & && && && && && && && && && && && & \hspace{-8pt} & p' \\
& \ar[uuuuuuuuu] & {\scriptstyle 0} && {\scriptstyle 1} && {\scriptstyle 2} && {\scriptstyle 3} &&  {\scriptstyle 4} && {\scriptstyle 5} & \cdots & {\scriptstyle q} && {\scriptstyle q+1} && {\scriptstyle q+2} && {\scriptstyle q+3}
}}
\]	
\vspace{-10pt}
\caption{First page ${}^{r+1}\EE_1^{\bullet,\bullet}$.}
\label{fig:IIE_1}
\end{figure}

\end{document}